\documentclass[12pt]{article}
\usepackage[reqno]{amsmath} 
\usepackage{amssymb,amsthm} 

\newcommand\al{\alpha}
\newcommand\be{\beta}

\newcommand\la{\lambda}
\newcommand\La{\Lambda}
\newcommand\cx{{\mathbb C}}
\newcommand\re{{\mathbb R}} 
\newcommand\ints{{\mathbb Z}} 
\newcommand\sbs{\subseteq}
\newcommand\ff{{\mathbb F}}

\newcommand\opk[1]{\mathop{\mathrm{#1}}\nolimits}

\newcommand\Hom{\opk{Hom}}

\newcommand\spn{\opk{span}}

\newcommand\tr{\opk{tr}}
\newcommand\GL{\opk{GL}}
\newcommand\U{\opk{U{}}}
\newcommand\PU{\opk{PU{}}}

\renewcommand\Re{\opk{Re}}
\renewcommand\Im{\opk{Im}}

\newcommand\conj[1]{\overline{#1}}
\newcommand\ip[2]{\left\langle#1,#2\right\rangle}
\newcommand\abs[1]{\left|#1\right|}

\newcommand\HomU[3]{\Hom(\U(#1),#2,#3)}

\newcommand\dimHomU[3]{D(#1,#2,#3)}

\theoremstyle{plain}
\newtheorem{theorem}{Theorem}
\newtheorem{lemma}[theorem]{Lemma}
\newtheorem{corollary}[theorem]{Corollary}
\newcommand\defn[1]{\textsl{#1}}

\begin{document}

\title{Unitary designs and codes}

\author{Aidan Roy\footnote{email: aroy@qis.ucalgary.ca} \\ {\small Department of Mathematics and Statistics, and,} \\ {\small Institute for Quantum Information Science,} \\ {\small University of Calgary, Calgary, Alberta T2N 1N4, Canada.} \\ \\
A. J. Scott\footnote{email: andrew.scott@griffith.edu.au} \\ {\small Centre for Quantum Dynamics, and,} \\ {\small Centre for Quantum Computer Technology,} \\ {\small Griffith University, Brisbane, Queensland 4111, Australia.}}


\date{}

\maketitle

\begin{abstract}
A unitary design is a collection of unitary matrices that approximate the entire unitary group, much like a spherical design approximates the entire unit sphere. In this paper, we use irreducible representations of the unitary group to find a general lower bound on the size of a unitary $t$-design in $\U(d)$, for any $d$ and $t$. We also introduce the notion of a unitary code --- a subset of $\U(d)$ in which the trace inner product of any pair of matrices is restricted to only a small number of distinct values --- and give an upper bound for the size of a code of degree $s$ in $\U(d)$ for any $d$ and $s$. These bounds can be strengthened when the particular inner product values that occur in the code or design are known. Finally, we describe some constructions of designs: we give an upper bound on the size of the smallest weighted unitary $t$-design in $\U(d)$, and we catalogue some $t$-designs that arise from finite groups.
\end{abstract}



\section{Introduction}
\label{sec:intro}

Recently, problems in quantum information theory \cite{Dankert06,Gross07,Scott08} have led to the study of a variation of spherical $t$-designs in which the points of the design are elements of the unitary group rather than points on a unit sphere. 

Let $X$ be a finite subset of $\U(d)$, the group of $d \times d$ unitary matrices. Then $X$ is called a \textsl{unitary $t$-design} if
\begin{equation}
\frac{1}{|X|} \sum_{U \in X} U^{\otimes t} \otimes (U^*)^{\otimes t} = \int_{\U(d)} U^{\otimes t} \otimes (U^*)^{\otimes t}\, dU,
\label{eqn:udesign}
\end{equation}
where $dU$ denotes the unit Haar measure (the unique invariant measure on $\U(d)$ normalized so that $\int_{\U(d)} dU = 1$). 
In this paper, we find lower bounds on the size of a unitary $t$-design by observing that, as implied by (\ref{eqn:udesign}), the space linearly spanned by the matrices $U^{\otimes r} \otimes (U^*)^{\otimes s}$ over all choices $U\in\U(d)$ is the same as that under the restriction $U\in X$ whenever $r+s=t$. 
In the process, we find that there is a duality between unitary designs and \defn{unitary codes}, which are finite subsets of the unitary group in which few inner product values occur between elements. We also give upper bounds on the size of a unitary code.

In order to verify directly that a set of unitary matrices forms a $t$-design, the RHS of \eqref{eqn:udesign} must be evaluated explicitly; this has been done by Collins \cite{Collins03} and Collins and \'Sniady \cite{Collins06}. In particular, $X$ is a unitary $1$-design if and only if
\[
\frac{1}{|X|} \sum_{U \in X} U \otimes U^* = \int_{\U(d)} U\otimes U^* \,dU = \frac{P_{(12)}}{d},
\]
where $P_{(12)}$ maps $u_1 \otimes u_2$ to $u_2 \otimes u_1$; $X$ is a unitary $2$-design if and only if
\[
\frac{1}{|X|} \sum_{U \in X} U^{\otimes 2} \otimes (U^*)^{\otimes 2}
= \frac{P_{(13)(24)} + P_{(14)(23)}}{d^2-1} - \frac{P_{(1423)}+P_{(1324)}}{d(d^2-1)},
\]
where here $P_{(13)(24)}$, for example, is the representation of the permutation $(13)(24)$ (written in cycle notation) in $(\cx^d)^{\otimes 4}$, and maps $u_1 \otimes u_2 \otimes u_3 \otimes u_4$ to $u_3 \otimes u_4 \otimes u_1 \otimes u_2$. For an evaluation of the integral for $t > 2$, see \cite{Scott08}. 


There is a second characterization of unitary $t$-designs, originally based on a bound of Welch \cite{Welch74}, which is easier to compute. See \cite[Theorem 5.4]{Scott08} for a proof of the following.

\begin{theorem}
\label{thm:innerproductchar}
For any finite $X \sbs \U(d)$,
\begin{equation}
\frac{1}{|X|^2}\sum_{U,V \in X} \abs{\tr(U^*V)}^{2t} \geq \int_{\U(d)} \abs{\tr(U)}^{2t} \, dU,
\label{eqn:gamma}
\end{equation}
with equality if and only if $X$ is a $t$-design.
\end{theorem}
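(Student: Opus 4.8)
The plan is to recognize the left-hand side of \eqref{eqn:gamma} as the squared norm of an empirical average in the Hilbert space of matrices, and to compare that average against a fixed orthogonal projection. Write $M_U := U^{\otimes t}\otimes(U^*)^{\otimes t}$ and equip the ambient matrix space with the Hilbert--Schmidt inner product $\ip{A}{B} = \tr(A^*B)$. The first and most essential step is the identity $\abs{\tr(U^*V)}^{2t} = \ip{M_U}{M_V}$. To see it, I would expand $M_U^*M_V = \bigl((U^*)^{\otimes t}V^{\otimes t}\bigr)\otimes\bigl(U^{\otimes t}(V^*)^{\otimes t}\bigr)$ using $(A\otimes B)(C\otimes D) = AC\otimes BD$, and then apply $(PQ)^{\otimes t} = P^{\otimes t}Q^{\otimes t}$ together with $\tr(A\otimes B) = \tr(A)\tr(B)$ to obtain $\tr(M_U^*M_V) = (\tr(U^*V))^t(\tr(V^*U))^t = \abs{\tr(U^*V)}^{2t}$, the last equality because $\tr(V^*U) = \conj{\tr(U^*V)}$.

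With this identity, both sides of \eqref{eqn:gamma} become squared norms. Setting $\bar M_X := \frac{1}{|X|}\sum_{U\in X}M_U$, bilinearity of the inner product gives $\frac{1}{|X|^2}\sum_{U,V\in X}\abs{\tr(U^*V)}^{2t} = \ip{\bar M_X}{\bar M_X} = \|\bar M_X\|^2$. Repeating the same computation with the sums replaced by Haar integrals, and using left-invariance of $dU$ to reduce the double integral $\int\!\int\abs{\tr(U^*V)}^{2t}\,dU\,dV$ to $\int\abs{\tr(W)}^{2t}\,dW$, shows that the right-hand side equals $\|\bar M\|^2$ where $\bar M := \int_{\U(d)}M_U\,dU$. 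So the claim reduces to the inequality $\|\bar M_X\|^2 \geq \|\bar M\|^2$ with equality exactly when $\bar M_X = \bar M$, which is the design condition \eqref{eqn:udesign}.

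The final step uses that $U\mapsto M_U$ is a representation, $M_UM_V = M_{UV}$, which makes $\bar M$ an orthogonal projection: Haar invariance gives $\bar M^2 = \bar M$, and $M_U^* = M_{U^{-1}}$ gives $\bar M^* = \bar M$. The same invariance yields $\bar M\,\bar M_X = \bar M$; taking adjoints gives $\bar M_X^*\,\bar M = \bar M$, hence $\ip{\bar M_X}{\bar M} = \tr(\bar M) = \|\bar M\|^2$, which is real. Expanding the nonnegative quantity $\|\bar M_X - \bar M\|^2 = \|\bar M_X\|^2 - 2\ip{\bar M_X}{\bar M} + \|\bar M\|^2$ then collapses the cross term and leaves $\|\bar M_X - \bar M\|^2 = \|\bar M_X\|^2 - \|\bar M\|^2 \geq 0$, which is \eqref{eqn:gamma}. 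Equality forces $\bar M_X = \bar M$, i.e.\ $X$ is a $t$-design, and conversely.

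I expect the main obstacle to be the opening identity: correctly bookkeeping the conjugates and tensor factors so that $\abs{\tr(U^*V)}^{2t}$ emerges cleanly as a genuine Hilbert--Schmidt inner product $\ip{M_U}{M_V}$. Once that is in place, the remainder is the standard ``variance is nonnegative'' argument, with the only subtlety being the verification that $\bar M$ is the orthogonal projection onto the invariants of the representation $U\mapsto M_U$.
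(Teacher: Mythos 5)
Your overall strategy --- rewriting both sides of \eqref{eqn:gamma} as squared Hilbert--Schmidt norms of averages of $M_U = U^{\otimes t}\otimes(U^*)^{\otimes t}$ and running a ``variance is nonnegative'' argument --- is sound, and it is essentially the proof given in the reference the paper cites for this statement (the paper itself does not prove Theorem~\ref{thm:innerproductchar}; it quotes \cite[Theorem 5.4]{Scott08}). Your opening identity $\tr(M_U^*M_V)=\abs{\tr(U^*V)}^{2t}$, the rewriting of the left-hand side as $\|\bar M_X\|^2$, and the reduction of the right-hand side to $\|\bar M\|^2$ via Haar invariance are all correct.

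However, the final step contains a genuine error. Your computation of the key identity reads $U^*$ as the conjugate transpose (you write $M_U^*=(U^*)^{\otimes t}\otimes U^{\otimes t}$, which needs $(U^*)^*=U$; this is also the reading under which the paper's explicit formula $\int U\otimes U^*\,dU=P_{(12)}/d$ holds). Under that reading, $U\mapsto M_U$ is \emph{not} a representation: since $U^*V^*=(VU)^*$, one has $M_UM_V=(UV)^{\otimes t}\otimes((VU)^*)^{\otimes t}\neq M_{UV}$ in general. Consequently $\bar M$ is not idempotent --- already for $t=1$, $\bar M=P_{(12)}/d$ and $\bar M^2=I/d^2\neq\bar M$ --- so the claims $\bar M^2=\bar M$ and $\bar M\,\bar M_X=\bar M$ fail as stated. (They would be true if $M_U$ were defined with the entrywise conjugate $\overline{U}$ in the second tensor factor, which is the convention making $M$ a genuine representation; but then your formula for $M_U^*$ and the emergence of $\abs{\tr(U^*V)}^{2t}$ with $*$ the adjoint would break. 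You cannot mix the two readings as the proof currently does.) Fortunately, the only consequence you need, namely $\ip{\bar M_X}{\bar M}=\|\bar M\|^2$, is true and follows in one line from what you already have: by the key identity and left invariance of the Haar measure,
\begin{equation*}
\ip{\bar M_X}{\bar M}=\frac{1}{|X|}\sum_{V\in X}\int_{\U(d)}\tr(M_V^*M_U)\,dU
=\frac{1}{|X|}\sum_{V\in X}\int_{\U(d)}\abs{\tr(V^*U)}^{2t}\,dU
=\int_{\U(d)}\abs{\tr(W)}^{2t}\,dW=\|\bar M\|^2,
\end{equation*}
which is real and equals the right-hand side of \eqref{eqn:gamma} --- the same substitution you used there. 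With this replacing the representation-theoretic justification, the expansion of $\|\bar M_X-\bar M\|^2\geq 0$ goes through, equality holds exactly when $\bar M_X=\bar M$, i.e.\ when \eqref{eqn:udesign} holds, and the proof is complete.
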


The RHS of \eqref{eqn:gamma} can also be evaluated explicitly: it is the number of permutations of $\{1,\ldots,t\}$ that have no increasing subsequence of length greater than $d$. For example, if $d \geq t$, then the RHS is $t!$. See Diaconis and Shahshahani \cite{Diaconis94} and Rains \cite{Rains98} for details.

Let $\Hom(r,s)$ denote the polynomials that are homogeneous of degree $r$ in the matrix entries of $U \in \U(d)$ and homogeneous of degree $s$ in the entries of $U^*$. Then a third characterization is the following: $X$ is a $t$-design if, for every $f \in \Hom(t,t)$,
\[
\frac{1}{|X|} \sum_{U \in X} f(U) = \int_{\U(d)} f(U) \, dU.
\]

Every $t$-design is also a $(t-1)$-design. To see this, note that the constant function $U \mapsto \tr(U^*U)/d = 1$ is in $\Hom(1,1)$. It follows that every $f \in \Hom(t-1,t-1)$ can be embedded in $\Hom(t,t)$ by multiplying by the constant function $1$. As with spherical $t$-designs, we are interested in finding $t$-designs of minimal size. For applications in quantum information theory, $2$-designs are currently generating the most interest.


The outline of the paper is as follows. In Section \ref{sec:unitreps}, we discuss irreducible representations of the unitary group, which are needed to describe the results in later sections. In Section \ref{sec:descodes}, we introduce unitary designs and their dual notion, unitary codes. We give lower bounds on the size of a design in terms of its strength and upper bounds on the size a code in terms of its degree; we call these bounds the \defn{absolute bounds}, as they closely resemble the absolute bounds for spherical $t$-designs and codes found by Delsarte, Goethals, and Seidel \cite{Delsarte75,Delsarte77}. In Section \ref{sec:zonal}, we introduce zonal orthogonal polynomials for $\U(d)$, which are used in Section \ref{sec:relbounds} to give more precise bounds when distances in a design or code are specified; these bounds are called \defn{relative bounds}. In Section \ref{sec:weighted}, we discuss weighted $t$-designs: there is an upper bound on the size of the smallest weighted $t$-design in $\U(d)$. Finally, in Section \ref{sec:constructions}, we catalogue constructions of designs from finite groups.

\section{Unitary Representations}
\label{sec:unitreps}

We begin by describing the irreducible representations of $\U(d)$. In fact, it will suffice to describe the irreducible representations $\GL(d,\cx)$. It is clear that every representation of $\GL(d,\cx)$ restricts to a representation of $\U(d)$; however, the converse is also true. Call a representation $(\rho,V)$ of $\GL(d,\cx)$ \defn{algebraic} if the matrix entries of $\rho(M)$, $M \in \GL(d,\cx)$, are polynomials in the matrix entries $M_{ij}$ and $\det(M)^{-1}$. The following theorem is given by Bump \cite[Theorem 38.3]{Bump04}.
\begin{theorem}
Every finite-dimensional representation of\/ $\U(d)$ extends uniquely to an algebraic representation of\/ $\GL(d,\cx)$. 
\end{theorem}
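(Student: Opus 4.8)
The plan is to prove uniqueness and existence separately, with the Zariski density of $\U(d)$ in $\GL(d,\cx)$ serving as the common engine for both parts.

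For uniqueness, I would first establish that $\U(d)$ is Zariski-dense in $\GL(d,\cx)$. Let $f$ be a regular function (a polynomial in the entries $M_{ij}$ and in $\det(M)^{-1}$) that vanishes on $\U(d)$. Fix $U\in\U(d)$ and a skew-Hermitian matrix $Y$, and consider $z\mapsto f(U\exp(zY))$. This is entire in $z\in\cx$, since the entries of $\exp(zY)$ are entire and the determinant never vanishes, and it vanishes for real $z$ because $\exp(zY)$ is unitary there, so $U\exp(zY)\in\U(d)$. Hence it vanishes identically, in particular at $z=i$. Polar decomposition writes every element of $\GL(d,\cx)$ as $U\exp(iY)$ with $U\in\U(d)$ and $Y$ skew-Hermitian, so $f\equiv0$. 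Two algebraic representations agreeing on the Zariski-dense set $\U(d)$ have matrix entries that agree as regular functions and therefore agree everywhere, which gives uniqueness.

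For existence, I would decompose the given finite-dimensional representation of $\U(d)$ into irreducibles (complete reducibility holds because $\U(d)$ is compact), reducing the problem to extending a single irreducible. Each irreducible of $\U(d)$ carries a highest weight $\la_1\geq\cdots\geq\la_d$ of integers and embeds as a subrepresentation of some mixed tensor space $(\cx^d)^{\otimes r}\otimes((\cx^d)^*)^{\otimes s}$: the polynomial irreducibles (those with $\la_d\geq0$) sit inside tensor powers of the standard representation $\cx^d$ by Schur--Weyl duality, and a negative lowest weight is absorbed by tensoring with copies of $\det^{-1}=\bigwedge^d(\cx^d)^*\sbs((\cx^d)^*)^{\otimes d}$. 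The standard representation $M\mapsto M$ is manifestly algebraic, its dual $M\mapsto(M^T)^{-1}$ is algebraic by Cramer's rule, and tensor products and direct sums of algebraic representations are algebraic; so each such tensor space carries an algebraic $\GL(d,\cx)$-representation whose restriction to $\U(d)$ is the usual action.

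The key transfer step is to show that the $\U(d)$-invariant subspace $W$ realizing the irreducible inside the tensor space $T$ is automatically $\GL(d,\cx)$-invariant. The set $\{g\in\GL(d,\cx):\rho(g)W\sbs W\}$ is Zariski-closed, being defined by the vanishing of the off-$W$ coordinates of $\rho(g)$ applied to a basis of $W$, and it contains the Zariski-dense subgroup $\U(d)$, hence equals all of $\GL(d,\cx)$. The restricted action on $W$ is then an algebraic representation of $\GL(d,\cx)$ extending the given irreducible; taking direct sums extends the original representation, and uniqueness guarantees independence of all choices. The main obstacle is the Zariski density argument, which powers both the uniqueness statement and the invariance of the subrepresentation $W$; the embedding of an arbitrary irreducible, including those with negative weights, into a mixed tensor space is the secondary point requiring care.
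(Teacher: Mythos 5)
The paper itself offers no proof of this theorem --- it is quoted directly from Bump \cite[Theorem 38.3]{Bump04} --- so there is no internal argument to compare against; your proposal should be judged as a standalone proof, and as such it is correct. It is the standard ``unitarian trick'' argument: the density step (the entire function $z\mapsto f(U\exp(zY))$ vanishes on $\re$, hence at $z=i$, and polar decomposition writes every element of $\GL(d,\cx)$ as $U\exp(iY)$ with $Y$ skew-Hermitian) is sound, and you correctly use it twice, once for uniqueness and once to upgrade $\U(d)$-invariance of the subspace $W$ to $\GL(d,\cx)$-invariance via a Zariski-closedness argument. One remark on what you quote: your existence step leans on the highest-weight classification of irreducible $\U(d)$-representations plus Schur--Weyl duality (and the $\det^{-k}$ twist) to embed each irreducible into a mixed tensor space $(\cx^d)^{\otimes r}\otimes((\cx^d)^*)^{\otimes s}$. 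This is legitimate and non-circular, since that classification is established for the compact group independently, but it can be avoided: by Peter--Weyl and Stone--Weierstrass, the polynomials in the entries of $U$ and $\conj{U}$ are uniformly dense in $C(\U(d))$, and these are precisely the matrix coefficients of the mixed tensor powers of the defining representation, so every irreducible of $\U(d)$ already occurs in some such tensor space with no appeal to weights. With either route, the remaining ingredients you cite --- complete reducibility from compactness, algebraicity of the standard and dual representations (Cramer's rule) and of direct sums, tensor products, and restrictions to invariant subspaces --- are all correct, so the proof goes through.
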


So, while many of the results quoted below were originally given for $\GL(d,\cx)$, they apply equally well to $\U(d)$. 

The irreducible representations of $\U(d)$ have been studied extensively in Lie theory and much is known of them. In particular, they may be indexed by nonincreasing integer sequences of length $d$, $\mu = (\mu_1,\ldots,\mu_d)$, where $\mu_i \in \ints$ and $\mu_i \geq \mu_{i+1}$. Weyl's character formula then gives the dimension $d_\mu := \dim V_\mu$ of each irreducible representation $(\rho_\mu,V_\mu)$. For a proof of the following theorem, see \cite[Theorem 25.5]{Bump04} or \cite[Theorem 7.32 \& Exercise 7.15]{Sepanski07}. 

\begin{theorem}
The irreducible representations of\/ $\U(d)$ may be indexed by nonincreasing length-$d$ integer sequences: $\mu = (\mu_1,\ldots,\mu_d)$. If $(\rho_\mu,V_\mu)$ is the representation indexed by $\mu$, then its dimension is
\begin{equation}
d_\mu = \prod_{1 \leq i < j \leq d} \frac{\mu_i - \mu_j + j-i}{j-i}.
\label{eqn:weylchar}
\end{equation}
\end{theorem}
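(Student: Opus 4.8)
The plan is to obtain the dimension as the value of the character of $(\rho_\mu, V_\mu)$ at the identity and to evaluate that value using Weyl's character formula. Recall that for a unitary matrix $U$ with eigenvalues $x_1,\ldots,x_d$, the character $\chi_\mu(U) = \tr\rho_\mu(U)$ depends only on the eigenvalues and equals the ratio of alternants
\begin{equation}
\chi_\mu(x_1,\ldots,x_d) = \frac{\det\bigl(x_i^{\mu_j + d - j}\bigr)_{1 \le i,j \le d}}{\det\bigl(x_i^{d-j}\bigr)_{1 \le i,j \le d}},
\label{eqn:weyl}
\end{equation}
where the denominator is the Vandermonde determinant $\prod_{1 \le i < j \le d}(x_i - x_j)$. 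Since $\rho_\mu(I)$ is the identity on $V_\mu$, we have $d_\mu = \chi_\mu(1,\ldots,1)$; the difficulty is that substituting $x_i = 1$ makes both alternants in \eqref{eqn:weyl} vanish, so the right-hand side is a $0/0$ indeterminate at the identity.

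To resolve this, I would approach the identity along the one-parameter family of eigenvalue tuples $x_i = q^{d-i}$ and let $q \to 1$. With this substitution each alternant becomes a genuine Vandermonde determinant: setting $z_j = q^{\mu_j + d - j}$, the numerator entry $x_i^{\mu_j + d - j}$ equals $z_j^{\,d-i}$, so the numerator is $\prod_{i<j}(z_i - z_j) = \prod_{i<j}\bigl(q^{\mu_i + d - i} - q^{\mu_j + d - j}\bigr)$, and likewise the denominator is $\prod_{i<j}\bigl(q^{d-i} - q^{d-j}\bigr)$. The overall sign is the same in both and cancels, so that for $q \ne 1$,
\[
\chi_\mu(q^{d-1},\ldots,q^0) = \prod_{1 \le i < j \le d} \frac{q^{\mu_i + d - i} - q^{\mu_j + d - j}}{q^{d - i} - q^{d - j}}.
\]

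Finally I would take the limit $q \to 1$ factor by factor. For integers $m \ne n$ and $p \ne r$ one has $\lim_{q \to 1}(q^m - q^n)/(q^p - q^r) = (m - n)/(p - r)$ (divide top and bottom by $q - 1$, or apply L'Hopital), so each factor tends to
\[
\frac{(\mu_i + d - i) - (\mu_j + d - j)}{(d - i) - (d - j)} = \frac{\mu_i - \mu_j + j - i}{j - i};
\]
here the denominator $(d-i)-(d-j) = j - i$ is nonzero and $\mu_i + d - i > \mu_j + d - j$ for $i < j$, so each factor is a legitimate $0/0$ limit. Since $\chi_\mu$ is continuous in the eigenvalues (it is a Laurent polynomial in the $x_i$), the value at the identity is the limit of these specializations, yielding \eqref{eqn:weylchar}. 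The main obstacle is precisely the indeterminacy at the identity: the content of the argument is choosing a specialization that keeps the eigenvalues distinct so that the alternants factor as Vandermonde products, and then justifying that the limit of the specialized characters computes the genuine value $\chi_\mu(1,\ldots,1)$. I would also note that nothing in the argument uses positivity of the $\mu_i$, so it applies uniformly to all nonincreasing integer sequences, including those with negative entries that index the non-polynomial representations of $\U(d)$.
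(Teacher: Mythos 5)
The paper does not actually prove this theorem: it is quoted as known background, with the proof delegated to Bump \cite{Bump04} (Theorem 25.5) and Sepanski \cite{Sepanski07}. Your computation is the standard derivation of the Weyl dimension formula from the Weyl character formula, and it is correct in all its details: the identification $x_i^{\mu_j+d-j}=z_j^{\,d-i}$ under the principal specialization $x_i=q^{d-i}$, the factorization of both alternants into Vandermonde products with matching signs, the factor-by-factor limit $q\to 1$, and the closing remark that nothing requires the $\mu_i$ to be nonnegative (so the argument covers the Laurent/rational representations). This is essentially the argument found in the sources the paper cites, so at the level of the dimension formula your route and the paper's (delegated) one coincide.

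The caveat is one of scope. The theorem makes two assertions: that the irreducible representations of $\U(d)$ are indexed by nonincreasing length-$d$ integer sequences, and that the representation indexed by $\mu$ has dimension $d_\mu$. Your proof presupposes the first assertion outright --- it is built into the indexing of the character formula you ``recall'' --- and the Weyl character formula itself is a strictly deeper input than the dimension formula you extract from it. So what you have is a correct reduction of the dimension formula to the character formula, not a self-contained proof of the statement; that matches the paper's own level of rigor (a citation), but you should say explicitly that highest-weight classification and the character formula are assumed. One further minor point: with real $q\neq 1$ the tuple $(q^{d-1},\dots,q,1)$ is not the spectrum of a unitary matrix, so strictly speaking you leave $\U(d)$; this is repaired either by taking $q=e^{i\theta}$ on the unit circle with $\theta\to 0$, or by noting (as your appeal to Laurent-polynomiality implicitly does) that the bialternant identity, valid on the dense set of unitary spectra with distinct eigenvalues, extends as an identity of rational functions to all distinct nonzero complex eigenvalues.
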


By way of example, the standard representation $\rho_\mu(U)=U$ is indexed by $\mu = (1,0,\ldots,0)$, which gives
\[
d_{(1,0,\ldots,0)} = d.
\]
There is generally more than one irreducible representation with the same dimension. 

Given a nonincreasing integer sequence $\mu$, let $\abs{\mu}$ denote the sum of the entries of $\mu$. If $\mu$ is a partition of $k$, then $\abs{\mu} = k$. Also, let $\mu_+$ denote the subsequence of $\mu$ of positive integers. For example, if $\mu := (1,1,0,-1)$, then $\mu_+ = (1,1)$. The decomposition of $(\cx^d)^{\otimes r} \otimes (\cx^{d*})^{\otimes s}$ into irreducible $\GL(d,\cx)$ representations has been described by Stembridge \cite{Stembridge87, Stembridge89}, and this decomposition also applies to $\U(d)$. In particular, the results of Stembridge imply the following (see also Benkart~{\it et al}. \cite[Theorem 1.3]{Benkart94}):

\begin{theorem}
The irreducible representations of\/ $\U(d)$ which occur in $(\cx^d)^{\otimes r} \otimes (\cx^{d*})^{\otimes s}$ are precisely those indexed by nonincreasing, length-$d$ integer sequences $\mu$ such that
\[
\abs{\mu} = r-s, \qquad \abs{\mu_+} \leq r.
\]
\end{theorem}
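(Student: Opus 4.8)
The plan is to prove the two inclusions separately: that the stated conditions are necessary for $V_\mu$ to occur in $M:=(\cx^d)^{\otimes r}\otimes(\cx^{d*})^{\otimes s}$, and that they are sufficient. Both directions I would run through the diagonal torus of $\GL(d,\cx)$ (legitimate by the extension theorem quoted above). Writing $e_1,\dots,e_d$ for the standard basis of $\cx^d$ and $e_1^*,\dots,e_d^*$ for the dual basis of $\cx^{d*}$, the diagonal matrix with entries $z_1,\dots,z_d$ scales $e_i$ by $z_i$ and $e_i^*$ by $z_i^{-1}$; hence each elementary tensor is a weight vector, and the weights of $M$ are exactly the integer vectors $(p_1-q_1,\dots,p_d-q_d)$ with $p_k,q_k\ge 0$, $\sum_k p_k=r$, and $\sum_k q_k=s$. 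This description is the engine for the necessity half.

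For necessity, suppose $V_\mu\sbs M$. First I would restrict to the central scalars $zI$, which act on $M$ by $z^{r-s}$ and on $V_\mu$ by $z^{\abs\mu}$; matching these forces $\abs\mu=r-s$. For the second condition I would use that the highest weight $\mu$ is itself a weight of $M$, so $\mu_k=p_k-q_k$ for suitable $p_k,q_k$ as above, whence
\[
\abs{\mu_+}=\sum_{k}\max(\mu_k,0)=\sum_k\max(p_k-q_k,0)\le\sum_k p_k=r.
\]

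For sufficiency I would construct a highest-weight vector explicitly. Given nonincreasing $\mu$ with $\abs\mu=r-s$ and $\abs{\mu_+}\le r$, let $\al:=\mu_+$, let $\be$ be the partition obtained by negating and reversing the negative entries of $\mu$, and put $p:=\abs\al$, $n:=\abs\be$. Then $p-n=\abs\mu=r-s$, and combined with $p\le r$ this gives $0\le n\le s$ and $c:=r-p=s-n\ge 0$. By the classical (Schur--Weyl) decomposition, $V_\al$ occurs in $(\cx^d)^{\otimes p}$, and dually $V_\be^{*}$ occurs in $(\cx^{d*})^{\otimes n}$, its highest weight consisting of the negative entries of $\mu$ sitting in the bottom coordinates. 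Because the positive and negative coordinates of $\mu$ are disjoint, the numbers of parts of $\al$ and of $\be$ sum to at most $d$, so the sum of the two highest weights is again nonincreasing and equals $\mu$. The tensor of the two highest-weight vectors is then a highest-weight vector of weight $\mu$, so $V_\mu$ is the Cartan component of $V_\al\otimes V_\be^*\sbs(\cx^d)^{\otimes p}\otimes(\cx^{d*})^{\otimes n}$. To finish, I would observe that $\cx^d\otimes\cx^{d*}$ contains the trivial representation, so this module is a direct summand of $M\cong\bigl[(\cx^d)^{\otimes p}\otimes(\cx^{d*})^{\otimes n}\bigr]\otimes(\cx^d\otimes\cx^{d*})^{\otimes c}$, and hence $V_\mu$ occurs in $M$.

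The routine parts are the weight bookkeeping and the Schur--Weyl decomposition of $(\cx^d)^{\otimes n}$; the step I expect to demand the most care is the dominance check in the sufficiency construction, namely that $\al$ placed at the top and the negated-reversed $\be$ placed at the bottom really do assemble into a \emph{nonincreasing} $\mu$. This is exactly where the disjointness of the positive and negative coordinates (the parts of $\al$ and $\be$ totalling at most $d$) is essential, and it is what guarantees that the product of highest-weight vectors is nonzero and generates $V_\mu$. A possible alternative would be to twist by $\det^{\,s}$, using $\cx^{d*}\otimes\det\cong\wedge^{d-1}\cx^d$ to convert $M\otimes\det^{\,s}$ into the polynomial module $(\cx^d)^{\otimes r}\otimes(\wedge^{d-1}\cx^d)^{\otimes s}$ and then invoking Pieri/Littlewood--Richardson; but I expect the direct Cartan-component argument to be shorter and to keep the two inequalities transparent.
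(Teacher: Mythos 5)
Your proof is correct, and it takes a genuinely different route from the paper, because the paper does not prove this theorem at all: it quotes the statement as a consequence of Stembridge's theory of rational representations of $\GL(d,\cx)$ \cite{Stembridge87,Stembridge89} (see also Benkart et al.\ \cite[Theorem 1.3]{Benkart94}), which determines the full decomposition of $(\cx^d)^{\otimes r}\otimes(\cx^{d*})^{\otimes s}$, multiplicities included, via rational tableaux. Your argument replaces that citation with an elementary, self-contained one, and each step checks out. For necessity you use only weight bookkeeping: the central character forces $\abs{\mu}=r-s$, and writing the highest weight as $\mu_k=p_k-q_k$ with $p_k,q_k\geq 0$, $\sum_k p_k=r$, $\sum_k q_k=s$ gives $\abs{\mu_+}=\sum_k\max(p_k-q_k,0)\leq\sum_k p_k=r$. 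For sufficiency you realize $V_\mu$ as the Cartan component of $V_\al\otimes V_\be^*$ with $\al=\mu_+$ and $\be$ the negated, reversed negative part; the dominance check is exactly the right point to flag, since the disjoint supports of $\al$ (top coordinates) and of the highest weight of $V_\be^*$ (bottom coordinates) are what make the sum of the two highest weights equal to the nonincreasing sequence $\mu$, so that the tensor of highest-weight vectors generates a copy of $V_\mu$. The final contraction step is also sound: the surplus $c=r-p=s-n\geq 0$ factors of $\cx^d\otimes\cx^{d*}$ each contain the trivial representation, and complete reducibility lets you extract $(\cx^d)^{\otimes p}\otimes(\cx^{d*})^{\otimes n}$ as a direct summand of the full space. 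The trade-off between the two approaches is informational: your route certifies occurrence (multiplicity at least one) but says nothing about actual multiplicities, whereas Stembridge's machinery computes them; since the paper explicitly remarks that it does not require the multiplicities, your proof would serve every use the paper makes of this theorem, at the cost of a page of standard highest-weight theory in place of a citation.
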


Both Stembridge and Benkart~{\it et al}.\ provide more information about the multiplicities of the representations; however, we do not require that information here. 

Next, we explain how the dimension of the span of the matrices of a representation relate to the dimension of the representation itself. 

\begin{lemma}
Let $(\rho_\mu,V_\mu)$ be an irreducible representation of\/ $\U(d)$. Then 
\[
\dim(\spn\{ \rho_\mu(U): U \in \U(d)\})={d_\mu}^2. 
\]
\end{lemma}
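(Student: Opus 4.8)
The plan is to identify the span as a matrix algebra and invoke Burnside's theorem. Write $A := \spn\{\rho_\mu(U) : U \in \U(d)\}$, a linear subspace of $\mathrm{End}(V_\mu) \cong \mathrm{Mat}_{d_\mu}(\cx)$. The upper bound $\dim A \leq {d_\mu}^2$ is immediate, since $\mathrm{End}(V_\mu)$ has dimension ${d_\mu}^2$; the content of the lemma is the reverse inequality, namely that the representation matrices fill out the entire endomorphism algebra.

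First I would observe that $A$ is a unital subalgebra of $\mathrm{End}(V_\mu)$: it contains $\rho_\mu(I) = I$, and it is closed under multiplication because $\rho_\mu(U)\rho_\mu(V) = \rho_\mu(UV) \in A$, so products of spanning elements — and hence of arbitrary elements of $A$ by bilinearity — again lie in $A$. Next, since $(\rho_\mu, V_\mu)$ is irreducible as a representation of $\U(d)$, the module $V_\mu$ is irreducible for the algebra $A$: any $A$-invariant subspace is in particular invariant under every $\rho_\mu(U)$, hence is $0$ or all of $V_\mu$, and conversely.

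With these two facts in hand I would apply Burnside's theorem (the finite-dimensional Jacobson density theorem): over an algebraically closed field, a subalgebra of $\mathrm{End}(V)$ acting irreducibly on a finite-dimensional space $V$ must be all of $\mathrm{End}(V)$. Since $\cx$ is algebraically closed and $V_\mu$ is finite-dimensional, this gives $A = \mathrm{End}(V_\mu)$, whence $\dim A = {d_\mu}^2$.

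The only real obstacle is whether one is willing to take Burnside's theorem as a black box; if not, one can argue directly via Schur orthogonality. A linear functional on $\mathrm{End}(V_\mu)$ has the form $M \mapsto \tr(BM)$, and $\tr(B\rho_\mu(U)) = \sum_{i,j} B_{ji}\, \rho_\mu(U)_{ij}$ is a linear combination of the matrix coefficients of $\rho_\mu$. By the Schur orthogonality relations for the compact group $\U(d)$ — computed with the Haar measure already in use in \eqref{eqn:udesign} — the ${d_\mu}^2$ functions $U \mapsto \rho_\mu(U)_{ij}$ are linearly independent, so this functional vanishes identically in $U$ only when $B = 0$. Thus no nonzero functional annihilates $A$, giving $A = \mathrm{End}(V_\mu)$ again. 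I expect the bookkeeping in the orthogonality route to require the most care, whereas the algebra argument sidesteps it entirely, so I would lead with Burnside's theorem.
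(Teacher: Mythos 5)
Your proposal is correct, but your lead argument is genuinely different from the paper's. The paper proves the lemma by identifying the span $M_\mu$ with the space of matrix coefficient functions $f_{ij}(U) = \rho_\mu(U)_{ij}$ on $\U(d)$ and invoking the Schur orthogonality relations to conclude that these ${d_\mu}^2$ functions are pairwise orthogonal, hence linearly independent --- which is exactly your fallback argument (modulo the trace-pairing duality step, which you actually spell out more carefully than the paper does). Your primary route via Burnside's theorem is sound: the span is a unital subalgebra of $\mathrm{End}(V_\mu)$ because $\rho_\mu$ is a homomorphism, it acts irreducibly because the representation does, and Burnside over $\cx$ then forces the algebra to be all of $\mathrm{End}(V_\mu)$. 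What Burnside buys you is generality and economy: it is purely algebraic, needs no Haar measure, no compactness, and no orthogonality, and it works verbatim for any irreducible finite-dimensional complex representation of any group. What the paper's route buys is continuity with the rest of the development: the identification of $M_\mu$ with a space of functions on $\U(d)$, orthogonal under the Haar inner product, is reused immediately afterwards --- the next lemma handles multiplicities and non-isomorphic components via cross-orthogonality of coefficient functions (where Burnside alone does not apply, since the image of a reducible representation is not all of $\mathrm{End}(V)$), and the same function-space viewpoint underlies the zonal polynomial machinery of Section \ref{sec:zonal}. So your argument is a clean, more self-contained proof of this one lemma, but the orthogonality approach is the one that scales to what the paper needs next.
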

\begin{proof}
Let $M_\mu := \spn \{ \rho_\mu(U) : U \in \U(d)\}$. We may identify each matrix entry $(i,j)$ with a function $f_{ij}: \U(d) \rightarrow \cx$ such that $f_{ij}(U) = \rho_\mu(U)_{ij}$; therefore, we may also identify $M_\mu$ with the space of matrix coefficient functions. Clearly $\dim M_\mu \leq {d_\mu}^2$, since each matrix in $M_\mu$ is a $d_\mu \times d_\mu$ matrix. To see the dimension is exactly ${d_\mu}^2$, it suffices to show that the functions $f_{ij}$ are linearly independent as functions on $\U(d)$. This follows from the Schur orthogonality relations (see Bump \cite[Theorem 2.4]{Bump04} for example), which state that $f_{ij}$ and $f_{kl}$ are orthogonal for $(i,j) \neq (k,l)$.
\end{proof}

\begin{lemma}
Let $(\rho,V)$ be a representation of\/ $\U(d)$ and let 
\[
V = \bigoplus_\mu m_\mu V_\mu
\]
be the irreducible decomposition of $V$, so that each $V_\mu$ occurs with multiplicity $m_\mu > 0$. Then 
\[
\dim(\spn\{ \rho(U): U \in \U(d)\})=\sum_{\mu} {d_\mu}^2. 
\]
\end{lemma}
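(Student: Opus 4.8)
The plan is to reduce everything to the single-irreducible computation of the previous lemma, using a convenient basis together with the Schur orthogonality relations in full strength. First I would recast the span of matrices as a span of functions, exactly as there. Write $N := \dim V$, let $M := \spn\{\rho(U) : U \in \U(d)\}$ viewed inside the $N \times N$ matrices, and set $f_{ij}(U) := \rho(U)_{ij}$. A linear functional $A \mapsto \sum_{ij} B_{ij} A_{ij}$ on matrices annihilates $M$ precisely when $\sum_{ij} B_{ij} f_{ij} = 0$ as a function on $\U(d)$; thus the annihilator of $M$ in the dual matrix space is identified with the space of linear relations among the $N^2$ entry functions $f_{ij}$. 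Since $\dim M$ equals $N^2$ minus the dimension of this annihilator, we get $\dim M = \dim \spn\{f_{ij} : 1 \leq i,j \leq N\}$, and it suffices to count linearly independent entry functions.

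Next I would invoke complete reducibility. Conjugation $A \mapsto SAS^{-1}$ by a fixed invertible $S$ is a linear automorphism of the matrix space and so preserves $\dim M$; I may therefore work in a basis adapted to $V = \bigoplus_\mu m_\mu V_\mu$, in which $\rho(U)$ is block diagonal with each block $\rho_\mu(U)$ repeated $m_\mu$ times and all remaining entries zero. Every entry function of $\rho$ is then either identically zero (off the diagonal blocks) or equals a matrix coefficient $f^\mu_{ij}(U) := \rho_\mu(U)_{ij}$ of an irreducible constituent. Discarding zeros and repetitions, the span of the entry functions equals the span of the functions $\{f^\mu_{ij} : \mu \text{ occurs in } V,\ 1 \leq i,j \leq d_\mu\}$.

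Finally I would count this family via Schur orthogonality. The previous lemma used only the within-representation relations to see that, for fixed $\mu$, the $d_\mu^2$ functions $f^\mu_{ij}$ are mutually orthogonal; here I additionally need the orthogonality of matrix coefficients of \emph{inequivalent} irreducibles, namely $f^\mu_{ij} \perp f^\nu_{kl}$ whenever $\mu \neq \nu$. Together these show that the whole family, ranging over all $\mu$ occurring in $V$ and all $1 \leq i,j \leq d_\mu$, is orthogonal and hence linearly independent; there are exactly $\sum_\mu d_\mu^2$ of them, giving $\dim M = \sum_\mu d_\mu^2$.

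The one genuine subtlety beyond the previous lemma is the bookkeeping of multiplicities, and this is the step I expect to require the most care. An irreducible with $m_\mu > 1$ produces $m_\mu$ identical diagonal blocks and so merely repeats the functions $f^\mu_{ij}$, contributing no new dimensions; this is exactly why the multiplicities drop out and the answer is $\sum_\mu d_\mu^2$ rather than a sum weighted by the $m_\mu$. Conversely, the distinct irreducible constituents must be shown to contribute independently, and it is precisely the cross-representation half of Schur orthogonality — which is not required in the single-irreducible case — that secures this.
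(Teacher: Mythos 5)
Your proof is correct and follows essentially the same route as the paper's: block-diagonalize so that repeated irreducible blocks collapse the multiplicities, then use the cross-representation half of the Schur orthogonality relations (together with the within-representation half from the previous lemma) to see that distinct irreducible constituents contribute independent sets of matrix coefficients, giving $\sum_\mu {d_\mu}^2$. The only difference is expository: you spell out the matrix-span/coefficient-function duality and the conjugation-invariance of $\dim M$, which the paper leaves implicit.
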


\begin{proof}
First, we claim that multiplicities do not play a role in the dimension of $M = \spn\{ \rho(U): U \in \U(d)\}$. For, suppose the irreducible component $V_\mu$ occurs $m_\mu$ times. Then the block diagonalized component of $\rho(U)$ corresponding to $V_\mu$ has the form
\[
\left( \begin{matrix} \rho_\mu(U) & 0 & 0 \\ 0 & \ddots & 0 \\ 0 & 0 & \rho_\mu(U) \end{matrix} \right) = \rho_\mu(U) \otimes I_{m_\mu},
\]
and the vector space spanned by $\{ \rho_\mu(U) \otimes I_{m_\mu}: U \in \U(d)\}$ is isomorphic to the space spanned by $\{ \rho_\mu(U): U \in \U(d)\}$. Next, we claim that if $V_\mu$ and $V_\nu$ are nonisomorphic irreducible representations, then every matrix coefficient function from $\rho_\mu(U)$ is orthogonal to every matrix coefficient function from $\rho_\nu(U)$. This is also a consequence of the Schur orthogonality relations: see Bump, \cite[Theorem 2.3]{Bump04}. 
\end{proof}

Combining these results we obtain the following closed form for the dimension of the space spanned by all representation matrices $\rho(U)=U^{\otimes r} \otimes (U^*)^{\otimes s}$. For later convenience, this is phrased in terms of the dual space of homogeneous polynomials of degree $r$ in the matrix entries of $U \in \U(d)$ and degree $s$ in the entries of $U^*$, which we denote $\Hom(r,s)$ or, in the following, $\HomU{d}{r}{s}$ to be explicit. 

\begin{theorem}
For positive integers $d$, $r$, and $s$, 
\[
\dim(\HomU{d}{r}{s}) = \sum_{\substack{\abs{\mu} = r-s \\ \abs{\mu_+} \leq r}} {d_{\mu}}^2,
\]
where the sum is over nonincreasing, length-$d$ integer sequences $\mu$, and
\[
d_\mu = \prod_{1 \leq i < j \leq d} \frac{\mu_i - \mu_j + j-i}{j-i}.
\]
\end{theorem}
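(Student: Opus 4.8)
The plan is to recognize the right-hand side as exactly the output of the second multiplicity lemma applied to the representation $\rho(U) = U^{\otimes r} \otimes (U^*)^{\otimes s}$ of $\U(d)$ acting on $(\cx^d)^{\otimes r} \otimes (\cx^{d*})^{\otimes s}$, so that the whole argument reduces to gluing together the three preceding results. First I would observe that the matrix entries of $\rho(U)$ are precisely the monomials $U_{i_1 j_1} \cdots U_{i_r j_r} (U^*)_{k_1 l_1} \cdots (U^*)_{k_s l_s}$, and that as the multi-indices range over all values these are exactly the spanning monomials of $\HomU{d}{r}{s}$. Hence the space of matrix coefficient functions of $\rho$ coincides with $\HomU{d}{r}{s}$.

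The one point requiring care --- and the main (mild) obstacle --- is to pass between the linear span of the matrices $\rho(U)$ inside the matrix algebra and the linear span of their coefficient functions on $\U(d)$: the theorem is phrased for the latter, but the lemmas compute the former. These two dimensions agree. Writing $\rho(U) = \sum_{a,b} f_{ab}(U) E_{ab}$ with matrix units $E_{ab}$, a linear relation $\sum_{a,b} c_{ab}\, \rho(U)_{ab} = 0$ holding for all $U$ is the same datum whether read as a dependence among the columns of the evaluation array $[\rho(U)_{ab}]$ (rows indexed by $U$, columns by $(a,b)$) or among its rows. Since row rank equals column rank, the dimension of $\spn\{\rho(U): U \in \U(d)\}$ equals the dimension of $\spn\{f_{ab}\}$, which is $\HomU{d}{r}{s}$. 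This is essentially the content of the first lemma's proof, where Schur orthogonality is invoked to count the independent coefficient functions; I would simply reuse that identification here.

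With this in hand, I would decompose the module into irreducibles and invoke the second lemma to obtain $\dim \HomU{d}{r}{s} = \dim \spn\{\rho(U) : U \in \U(d)\} = \sum_\mu {d_\mu}^2$, the sum running over the distinct $\mu$ occurring with positive multiplicity. The decomposition theorem then identifies this index set exactly: the irreducibles appearing in $(\cx^d)^{\otimes r} \otimes (\cx^{d*})^{\otimes s}$ are those indexed by nonincreasing length-$d$ integer sequences $\mu$ with $\abs{\mu} = r-s$ and $\abs{\mu_+} \leq r$. Substituting Weyl's dimension formula \eqref{eqn:weylchar} for each $d_\mu$ yields the claimed closed form, completing the argument.
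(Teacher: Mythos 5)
Your proposal is correct and follows essentially the same route as the paper: the paper likewise obtains this theorem by combining the two span-dimension lemmas with the Stembridge decomposition theorem and Weyl's formula, identifying $\HomU{d}{r}{s}$ with the coefficient functions of $\rho(U)=U^{\otimes r}\otimes (U^*)^{\otimes s}$. The paper states this combination without spelling out the matrix-span versus function-span identification; your rank-duality argument correctly fills in exactly that step, which the paper's lemma proofs already handle implicitly by working with coefficient functions.
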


For convenience set $\dimHomU{d}{r}{s} := \dim(\HomU{d}{r}{s})$ in the following. Now,
by way of example, suppose $d = 3$ and $r = s = 2$. Then
\begin{align*}
\dimHomU{3}{2}{2} & = d_{(0,0,0)}^2 + d_{(1,0,-1)}^2 + d_{(2,0,-2)}^2 + d_{(2,-1,-1)}^2 + d_{(1,1,-2)}^2 \\
& = 1^2 + 8^2 + 27^2 + 10^2 + 10^2 \\
& = 994. 
\end{align*}
It should be clear that $\dimHomU{d}{s}{r}=\dimHomU{d}{r}{s}$. The cases we are most interested in are $s=r$ and $s=r-1$. For $r\leq 3$, these dimensions are
\begin{align*}
\dimHomU{d}{1}{0} & = d^2,  &\\
\dimHomU{d}{1}{1} & = d^4-2d^2+2,  &\\
\dimHomU{d}{2}{1} & = d^2(d^4-3d^2+6)/2, & (d\geq 2) \\
\dimHomU{d}{2}{2} & = (d^8-6d^6+25d^4-28d^2+16)/4, & (d\geq 3) \\
\dimHomU{3}{3}{2} & = 2835,  &\\ 
\dimHomU{d}{3}{2} & = d^2(d^8-8d^6+47d^4-88d^2+84)/12, & (d\geq 4) \\
\dimHomU{3}{3}{3} & = 7540, \qquad \dimHomU{4}{3}{3} = 265879, & \\
\dimHomU{d}{3}{3} & = (d^{12}-12d^{10}+103d^8-378d^6+778d^4-600d^2+252)/36. & (d\geq 5) 
\end{align*} 

Additionally, it is easy to show that
\[
\dimHomU{2}{r}{s}  = \binom{r+s+3}{3}.
\]
Furthermore, since the number of independent homogeneous polynomials of degree $k$ in $n$ variables is $\binom{n+k-1}{k}$, and in $\U(d)$ we have $d^2$ variables, we also have 
\[ 
\dimHomU{d}{r}{0} = \binom{d^2+r-1}{r}.
\] 
Finally, since $\HomU{d}{r}{s}$ can be embedded into $\Hom(\cx^{d^2},r,s)$ as a subspace, the dimension of the latter provides an upper bound to the dimension of the former:
\[
\dimHomU{d}{r}{s}\leq \binom{d^2+r-1}{r}\binom{d^2+s-1}{s},
\]
from which we can conclude that $\dimHomU{d}{r}{s}=O(d^{2(r+s)})$ for fixed $r$ and $s$, or, $\dimHomU{d}{r}{s}=O((rs)^{d^2-1})$ for fixed $d$.


\section{Absolute bounds} 
\label{sec:descodes}

In this section we use the dimension of $\HomU{d}{r}{s}$ to give an upper bound on the size a unitary design, as well as an analogous lower bound on the size of a unitary code. These bounds are \defn{absolute}, meaning they depend only on the strength of the design or the degree of the code, rather than the distances that occur in the subset. Recall that $X$ is a \defn{unitary $t$-design} if for every $f \in \Hom(t,t) = \HomU{d}{t}{t}$,
\[
\frac{1}{|X|} \sum_{U \in X} f(U) = \int_{\U(d)} f(U) \, dU.
\]

The inner product for functions $f$ and $g$ on $\U(d)$ is the average value of $\overline{f}g$:
\[
\ip{f}{g} := \int_{\U(d)} \overline{f(U)}g(U) \, dU.
\]
Similarly, we use $\ip{f}{g}_X$ to denote the average value of $\overline{f}g$ over $X$, for any finite subset $X \sbs \U(d)$: this is an inner product for functions on $X$. It follows that $X$ is a unitary $t$-design if and only if
\[
\ip{1}{f} = \ip{1}{f}_X
\]
for all $f \in \Hom(t,t)$.

The \defn{absolute bound} for designs is the following:

\begin{theorem}
\label{thm:absdesbound}
If $X\subseteq\U(d)$ is a $t$-design, then
\[
|X| \geq \dimHomU{d}{\lceil t/2 \rceil}{\lfloor t/2 \rfloor}.
\]
\end{theorem}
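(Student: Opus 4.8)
The plan is to mimic the Delsarte--Goethals--Seidel absolute bound for spherical designs: I will show that the evaluation map sending a polynomial to its vector of values on $X$ is injective on a suitably chosen space, which forces $|X|$ to be at least the dimension of that space. The space to use is $\Hom(r,s)$ with the balanced split $r = \lceil t/2\rceil$ and $s = \lfloor t/2\rfloor$, so that $r+s=t$; by the dimension theorem its dimension is exactly $\dimHomU{d}{\lceil t/2\rceil}{\lfloor t/2\rfloor}$, the claimed lower bound.

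The crux is a short degree computation. For $f \in \Hom(r,s)$, complex conjugation interchanges the entries of $U$ with those of $U^*$, so $\overline{f} \in \Hom(s,r)$; hence for any $f,g \in \Hom(r,s)$ the product $\overline{f}g$ is homogeneous of degree $s+r=t$ in the entries of $U$ and of degree $r+s=t$ in the entries of $U^*$, i.e. $\overline{f}g \in \Hom(t,t)$. This is the one place where real care is needed, and it is exactly what dictates the balanced split: any other partition of $t$ into $r+s$ would still land in $\Hom(t,t)$, but $r=\lceil t/2\rceil$, $s=\lfloor t/2\rfloor$ maximizes $\dimHomU{d}{r}{s}$ (since $\dimHomU{d}{r}{s}=\dimHomU{d}{s}{r}$ and the dimensions grow as the split becomes more even), giving the strongest bound.

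With this in hand, since $X$ is a $t$-design I apply the defining identity to the function $h := \overline{f}g \in \Hom(t,t)$ to conclude $\ip{f}{g}_X = \ip{f}{g}$ for all $f,g \in \Hom(r,s)$; that is, the discrete and continuous inner products coincide on $\Hom(r,s)$. The continuous inner product $\ip{\cdot}{\cdot}$ is the $L^2$ pairing against Haar measure, hence positive definite, and it stays positive definite when restricted to the finite-dimensional subspace $\Hom(r,s)$. Therefore $\ip{\cdot}{\cdot}_X$ is positive definite on $\Hom(r,s)$ as well.

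Finally I would extract the bound. If some nonzero $f \in \Hom(r,s)$ vanished at every point of $X$, then $\ip{f}{f}_X = 0$, contradicting positive definiteness; thus the evaluation map $\Hom(r,s) \to \cx^X$ is injective, and comparing dimensions gives $\dimHomU{d}{r}{s} = \dim\Hom(r,s) \leq |X|$, which is the assertion. The whole argument is genuinely routine once the degree bookkeeping of the second paragraph is pinned down, so that step — ensuring $\overline{f}g$ really lands in $\Hom(t,t)$ so that the design hypothesis can be invoked — is the only one I expect to require attention.
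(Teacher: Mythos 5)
Your proof is correct and takes essentially the same approach as the paper: the paper fixes an orthonormal basis $S_1,\dots,S_N$ of $\Hom(\lceil t/2\rceil,\lfloor t/2\rfloor)$, observes that $\overline{S_i}S_j\in\Hom(t,t)$ so the design property gives $\ip{S_i}{S_j}_X=\ip{S_i}{S_j}$, and concludes the $S_i$ are linearly independent as functions on $X$, whence $|X|\geq N$. Your formulation via positive definiteness of $\ip{\cdot}{\cdot}_X$ and injectivity of the evaluation map $\Hom(r,s)\to\cx^X$ is just an equivalent packaging of that same argument (your side remark that the balanced split maximizes the dimension is unneeded for the proof and is not established in the paper, but it does not affect correctness).
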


\begin{proof}
Let $S_1,\ldots, S_N$ be an orthonormal basis for $\Hom(\lceil t/2 \rceil,\lfloor t/2 \rfloor)$. Then $\overline{S_i}S_j$ is in $\Hom(t,t)$. Since $X$ is a $t$-design,
\[
\ip{S_i}{S_j} = \ip{1}{\overline{S_i}S_j} = \ip{1}{\overline{S_i}S_j}_X = \ip{S_i}{S_j}_X.
\]
So, the polynomials $S_i:X \rightarrow \cx$ are orthogonal (and therefore independent) as functions on $X$. The space of functions on $X$ has dimension $|X|$, which is at least the number of independent elements $N$. 
\end{proof}

For $t = 2$, the bound $|X| \geq \dimHomU{d}{1}{1} = d^4 - 2d^2 + 2$ was first found by Gross, Audenaert, and Eisert \cite{Gross07}.
If equality holds in Theorem~\ref{thm:absdesbound}, then $\Hom(\lceil t/2 \rceil,\lfloor t/2 \rfloor)$ spans the space of functions on $X$. 
At the end of Section \ref{sec:relbounds} we will make further comments on this case.

The proof of Theorem~\ref{thm:absdesbound} in fact shows $|X| \geq \dimHomU{d}{r}{s}$ for any $t$-design $X$, whenever $r+s=t$; in particular, $|X| \geq \dimHomU{d}{t}{0} = \binom{d^2+t-1}{t}$, which is $\Omega(d^{2t})$ for fixed $t$, or, $\Omega(t^{d^2-1})$ for fixed $d$. Constructions meeting the latter asymptotic lower bound are known for $\U(2)$~\cite{Scott08}.

Define a finite subset $X \sbs \U(d)$ to be a \defn{unitary code of degree $s$} or a \defn{unitary $s$-distance set} if $\abs{\tr(U^*M)}^2$ takes only $s$ distinct values for all $U \neq M$ in $X$.

\begin{theorem}
\label{thm:abscodebound}
If $X\subseteq\U(d)$ is an $s$-distance set, then
\[
|X| \leq \dimHomU{d}{s}{s}.
\]
Moreover if some $U$ and $M$ in $X$ are orthogonal, then
\[
|X| \leq \dimHomU{d}{s}{s-1}.
\]
\end{theorem}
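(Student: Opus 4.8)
The plan is to adapt the Delsarte--Goethals--Seidel annihilator (Lagrange interpolation) argument to the unitary setting, building for each code element a polynomial supported on a single point of $X$. Write the $s$ distinct values of $\abs{\tr(U^*M)}^2$ taken over distinct pairs $U,M \in X$ as $\al_1,\dots,\al_s$. Each satisfies $\al_k \le d^2$, and $\al_k = d^2$ would force $M = \la U$ for some unimodular $\la$ by the equality case of Cauchy--Schwarz; I therefore assume $X$ contains no two elements that are scalar multiples of one another, so that $\al_k < d^2$ for every $k$. For each $U \in X$ define
\[
F_U(M) = \prod_{k=1}^{s} \frac{\abs{\tr(U^*M)}^2 - \al_k}{d^2 - \al_k}.
\]
Since $\abs{\tr(U^*U)}^2 = d^2$, we have $F_U(U) = 1$, while $F_U(M) = 0$ for every other $M \in X$, because $\abs{\tr(U^*M)}^2$ then equals one of the $\al_k$ and the corresponding factor vanishes. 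Thus $F_U(M) = \de_{U,M}$ on $X$.

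The next step is to locate these functions in $\Hom(s,s)$. Viewing $M$ as the variable and $U$ as fixed, $\tr(U^*M)$ is linear in the entries of $M$ and $\tr(M^*U) = \overline{\tr(U^*M)}$ is linear in the entries of $M^*$, so $\abs{\tr(U^*M)}^2 = \tr(U^*M)\tr(M^*U)$ lies in $\Hom(1,1)$. The constant function $1 = \tr(M^*M)/d$ also lies in $\Hom(1,1)$, so each factor of $F_U$ is an element of $\Hom(1,1)$; as homogeneous degrees add under multiplication, the product of $s$ such factors lies in $\Hom(s,s) = \HomU{d}{s}{s}$. Finally, if $\sum_{U} c_U F_U = 0$ identically, then evaluating at each $M \in X$ gives $c_M = 0$, so the $|X|$ functions $F_U$ are linearly independent in $\HomU{d}{s}{s}$. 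Hence $|X| \le \dimHomU{d}{s}{s}$, the first bound.

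For the refinement, suppose $\tr(U_0^*M_0) = 0$ for some pair, so that one distance value, say $\al_1$, equals $0$. In place of the factor $\abs{\tr(U^*M)}^2/d^2$ I will use the single linear factor $\tr(U^*M)$, setting
\[
G_U(M) = \tr(U^*M) \prod_{k=2}^{s} \frac{\abs{\tr(U^*M)}^2 - \al_k}{d^2 - \al_k}.
\]
This still satisfies $G_U(U) = d$ and $G_U(M) = 0$ for distinct $M \in X$: a pair at the zero distance is killed by the leading factor $\tr(U^*M) = 0$, and a pair at distance $\al_k$ with $k \ge 2$ by the corresponding product factor. Now the leading factor lies in $\Hom(1,0)$ and the remaining $s-1$ factors in $\Hom(1,1)$, so $G_U \in \Hom(s,s-1) = \HomU{d}{s}{s-1}$. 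The same evaluation argument shows the $G_U$ are independent there, giving $|X| \le \dimHomU{d}{s}{s-1}$.

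The routine parts are the two degree computations and the independence argument; the step needing genuine care is the normalization, namely ensuring $d^2 \neq \al_k$ so that $F_U$ and $G_U$ are well defined. This is exactly the scalar-multiple subtlety flagged above, and it is essentially the only place where the distinction between working in $\U(d)$ and in $\PU(d)$ enters. Once no two code elements are proportional, the whole argument goes through verbatim.
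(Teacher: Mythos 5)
Your proof is correct and follows essentially the same route as the paper's: the same Delsarte-style annihilator $\prod_k\bigl(\abs{\tr(U^*M)}^2 - \al_k\bigr)$, homogenized by identifying the constant with $\tr(M^*M)/d$ so each factor lies in $\Hom(1,1)$, with the same replacement of the vanishing factor by the linear factor $\tr(U^*M)$ to land in $\Hom(s,s-1)$ when an orthogonal pair exists. Your explicit exclusion of proportional pairs (equivalently, of the value $\al_k = d^2$) is not a defect but a point of extra care: the paper's proof needs the same assumption implicitly, since its claim that $A_U(U)\neq 0$ fails exactly when some $\al_i = d^2$, i.e.\ when two code elements differ by a phase.
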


\begin{proof}
Fix $U \in X$. The function $M \mapsto \abs{\tr(U^*M)}^2$ is in $\Hom(1,1)$. If $\al_1,\ldots,\al_s$ denote the nontrivial values of $\abs{\tr(U^*M)}^2$ that occur in $X$, then define an annihilator
\[
A_U(M) := \prod_{i=1}^s (\abs{\tr(U^*M)}^2 - \al_i\tr(M^*M)/d).
\]
Then $A_U$ is in $\Hom(s,s)$, and $A_U(M) = 0$ for every $M \in X$ except $U$ (and $A_U(U)$ is nonzero). Therefore the set of functions $\{A_U: U \in X\}$ are linearly independent in $\Hom(s,s)$, and there are $|X|$ such functions.

In the case when $\al_1 = 0$, consider 
\[
A_U(M) := \tr(U^*M)\prod_{i=2}^s (\abs{\tr(U^*M)}^2 - \al_i\tr(M^*M)/d)
\]
in $\Hom(s,s-1)$ instead.
\end{proof}

\section{Zonal orthogonal polynomials}
\label{sec:zonal}

Let $\chi_\mu$ denote the character for an irreducible representation $(\rho_\mu,V_\mu)$. Then define a \defn{zonal orthogonal polynomial} as follows, for $U$ and $M$ in $\U(d)$:
\[
Z_{\mu,U}(M) := d_\mu \chi_\mu(U^*M).
\]
It is clear that $Z_{\mu,U}(M)$ depends only on $U^*M$; in fact, $Z_{\mu,U}(M)$ depends only on the eigenvalues of $U^*M$, since the characters $\chi_\mu(U^*M)$ are symmetric functions of the eigenvalues of $U^*M$. The functions are called zonal for this reason: they depend only on the ``zone", or spectrum, of the matrices. (They are called polynomials because they are polynomial in the eigenvalues of $U^*M$ and their conjugates, and they are called orthogonal because $Z_{\mu,U}$ and $Z_{\nu,M}$ are orthogonal for $\mu \neq \nu$.) This concept of ``zone" is more general than that of spherical codes and designs \cite[Chapter 15]{Godsil93}, where the zonal orthogonal polynomials are univariate. 

These polynomials have a number of special properties which will be used in the next section to derive bounds for unitary codes and designs. The most useful is following:

\begin{lemma}
\label{lem:orthorel}
Let $M_\mu$ denote the space of matrix coefficient functions on $V_\mu$. Then for any $p \in M_\mu$,
\[
\ip{Z_{\mu,U}}{p} = p(U).
\]
\end{lemma}

\begin{proof}
Recall that by the Schur orthogonality relations, the functions $f_{ij}(M) := \sqrt{d_\mu} \rho_\mu(M)_{ij}$ form an orthonormal basis for $M_\mu$. Since
\begin{align*}
Z_{\mu,U}(M) & = d_\mu \tr(\rho_\mu(U)^*\rho_\mu(M)) \\
& = d_\mu \sum_{ij} \conj{\rho_\mu(U)_{ij}}\rho_\mu(M)_{ij} \\
& = \sum_{ij} \overline{f_{ij}(U)}f_{ij}(M),
\end{align*}
it follows that $Z_{\mu,U} = \sum_{ij} \overline{f_{ij}(U)}f_{ij}$. Thus $Z_{\mu,U}$ is in $M_\mu$. Moreover, for any $p \in M_\mu$, 
\begin{align*}
\ip{Z_{\mu,U}}{p} & = \ip{\sum_{ij} \overline{f_{ij}(U)} f_{ij}}{p} \\
& = \sum_{ij}\ip{f_{ij}}{p} f_{ij}(U) \\
& = p(U),
\end{align*}
where in the last line we have written $p$ in terms of the basis $\{f_{ij}\}$.
\end{proof}

It follows from Lemma \ref{lem:orthorel} that any polynomial in $M_\mu$ which is orthogonal to every $Z_{\mu,U}$ must be identically zero. Therefore the set $\{Z_{\mu,U} : U \in \U(d)\}$ spans $M_\mu$, and we can use zonal orthogonal polynomials to characterize designs.

\begin{corollary}
\label{cor:chartdes}
A finite subset $X \sbs \U(d)$ is a $t$-design if and only if 
\[
\sum_{M \in X} Z_{\mu,U}(M) = 0
\]
for every $\mu \neq (0,\ldots,0)$ that occurs in $\Hom(t,t)$ (i.e. $\abs{\mu} = 0$, $\abs{\mu_+} \in [1,t]$).
\end{corollary}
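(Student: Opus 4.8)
The plan is to combine the third characterization of $t$-designs (in terms of $\Hom(t,t)$) with the spanning property of the zonal polynomials just established after Lemma \ref{lem:orthorel}. First I would recall that $X$ is a $t$-design if and only if $\ip{1}{f} = \ip{1}{f}_X$ for every $f \in \Hom(t,t)$, and that $\Hom(t,t)$ decomposes as $\bigoplus_\mu M_\mu$ over the irreducibles $\mu$ occurring in $(\cx^d)^{\otimes t}\otimes(\cx^{d*})^{\otimes t}$, i.e.\ those with $\abs{\mu}=0$ and $\abs{\mu_+}\leq t$. Since the condition $\ip{1}{f}=\ip{1}{f}_X$ is linear in $f$, it suffices to verify it as $f$ ranges over a spanning set of $\Hom(t,t)$, and hence over each summand $M_\mu$ separately.

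Next I would compute $\ip{1}{f}$ for $f \in M_\mu$. The key observation is that the constant function $1$ is the matrix coefficient of the trivial representation $\mu=(0,\ldots,0)$, so by the orthogonality of matrix coefficients of nonisomorphic irreducibles (the Schur relations already invoked in the proof of Lemma \ref{lem:orthorel} and the preceding dimension lemma), we have $\ip{1}{f}=0$ for every $f \in M_\mu$ with $\mu \neq (0,\ldots,0)$. Thus for nontrivial $\mu$ the design condition on $M_\mu$ reduces to $\ip{1}{f}_X = 0$, that is, $\sum_{M \in X} f(M) = 0$, for all $f \in M_\mu$. The trivial summand $\mu=(0,\ldots,0)$ contributes only the constant functions, on which $\ip{1}{f}=\ip{1}{f}_X$ holds automatically, so it imposes no condition and is correctly excluded from the statement.

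It then remains to replace the condition ``$\sum_{M\in X} f(M)=0$ for all $f \in M_\mu$'' by the single condition ``$\sum_{M\in X} Z_{\mu,U}(M)=0$ for all $U \in \U(d)$.'' One direction is immediate since each $Z_{\mu,U}$ lies in $M_\mu$ by Lemma \ref{lem:orthorel}. For the converse, I would use exactly the spanning fact recorded right after that lemma: the set $\{Z_{\mu,U}:U\in\U(d)\}$ spans $M_\mu$. Hence if the sum vanishes for every $Z_{\mu,U}$, linearity gives $\sum_{M\in X} f(M)=0$ for every $f \in M_\mu$. Assembling these equivalences over all nontrivial $\mu$ occurring in $\Hom(t,t)$ yields the corollary.

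I do not expect a serious obstacle here, as the statement is essentially a repackaging of Lemma \ref{lem:orthorel} together with the $\Hom(t,t)$ characterization; the only point requiring care is the bookkeeping of which $\mu$ appear in $\Hom(t,t)$ and the clean separation of the trivial representation, so that the quantifier in the corollary matches the nontrivial summands exactly.
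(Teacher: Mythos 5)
Your proof is correct and follows essentially the same route as the paper: both reduce the design condition to the spanning set $\{Z_{\mu,U}\}$ of $\Hom(t,t)$ via Lemma \ref{lem:orthorel} and then kill the inner products $\ip{1}{Z_{\mu,U}}$ for nontrivial $\mu$ by Schur orthogonality of $M_\mu$ with $M_{(0,\ldots,0)}$. Your only divergence is organizational --- you decompose $\Hom(t,t)$ into the summands $M_\mu$ before invoking the spanning fact, whereas the paper applies it to $\Hom(t,t)$ all at once --- which changes nothing of substance.
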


\begin{proof}
$X$ is a $t$-design if and only if $\ip{1}{f}_X = \ip{1}{f}$ for every $f \in \Hom(t,t)$. Since $\{Z_{\mu,U}: U \in \U(d),\abs{\mu} = 0,\abs{\mu_+} \leq t\}$ spans $\Hom(t,t)$, we see that $X$ is a $t$-design if and only if $\ip{1}{Z_{\mu,U}}_X = \ip{1}{Z_{\mu,U}}$ for all $U$ and $\mu$. But $M_\mu$ is orthogonal to $M_{0,\ldots,0}$ for $\mu \neq (0,\ldots,0)$, so $\ip{1}{Z_{\mu,U}} = 0$. 
\end{proof}

We can write down a closed form of $Z_\mu$ as a symmetric polynomial in $d$ variables, so that if $\{\la_i\}$ is the spectrum of $\La = U^*M$, 
\[
Z_{\mu,U}(M) = Z_\mu(\La) = Z_\mu(\la_1,\ldots,\la_d).
\]
The following is the content of \cite[Theorem 38.2 \& Proposition 38.2]{Bump04}.

\begin{theorem}
Let $V_\mu$ be the irreducible representation of $\U(d)$ indexed by nonincreasing integer sequence $\mu$. If $\mu_d = 0$, then the character of $V_\mu$ is
\[
\chi_\mu(\La) = s_\mu(\la_1,\ldots,\la_d),
\]
where $s_\mu$ is the Schur polynomial, and $\{\la_1,\ldots,\la_d\}$ are the eigenvalues of $\La$. If $\mu_d \neq 0$, then define 
\[
\mu' = (\mu_1-\mu_d,\ldots,\mu_{d-1}-\mu_d,0).
\]
Then the character of $V_\mu$ is
\[
\chi_\mu(\La) = \det(\La)^{\mu_d}\chi_{\mu'}(\La).
\]
\end{theorem}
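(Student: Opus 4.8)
The plan is to derive both statements from the Weyl character formula, after which each case is a short bookkeeping step. First I would record that $\chi_\mu$ is a class function, so since every unitary matrix is diagonalizable it is determined by its values on diagonal matrices $\La=\mathrm{diag}(\la_1,\ldots,\la_d)$ and is a symmetric polynomial in the eigenvalues $\la_1,\ldots,\la_d$; this legitimizes the notation $\chi_\mu(\La)=\chi_\mu(\la_1,\ldots,\la_d)$. I would then invoke Weyl's character formula (the same result behind the dimension formula for $d_\mu$ quoted earlier, and the content of Bump's Theorem~38.2), which gives the bialternant expression
\[
\chi_\mu(\la_1,\ldots,\la_d)=\frac{\det(\la_i^{\mu_j+d-j})_{1\le i,j\le d}}{\det(\la_i^{d-j})_{1\le i,j\le d}},
\]
where the denominator is the Vandermonde determinant $\prod_{i<j}(\la_i-\la_j)$.

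For the case $\mu_d=0$ there is essentially nothing to do: the sequence $\mu$ is then a genuine partition (nonincreasing and nonnegative), and the ratio above is precisely the classical bialternant definition of the Schur polynomial $s_\mu$, so $\chi_\mu=s_\mu$. For the case $\mu_d\neq 0$ I would factor $\la_i^{\mu_d}$ out of the $i$-th row of the numerator. Writing $\mu_j=\mu'_j+\mu_d$ gives $\la_i^{\mu_j+d-j}=\la_i^{\mu_d}\,\la_i^{\mu'_j+d-j}$, and pulling the common factor from each row yields
\[
\det(\la_i^{\mu_j+d-j})=\Bigl(\prod_{i=1}^d\la_i\Bigr)^{\mu_d}\det(\la_i^{\mu'_j+d-j})=\det(\La)^{\mu_d}\det(\la_i^{\mu'_j+d-j}).
\]
The denominator is unchanged, so dividing gives $\chi_\mu(\La)=\det(\La)^{\mu_d}\chi_{\mu'}(\La)$; and since $\mu'=(\mu_1-\mu_d,\ldots,\mu_{d-1}-\mu_d,0)$ is again nonincreasing with last entry $0$, the first case applies to identify $\chi_{\mu'}$ with $s_{\mu'}$. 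The same factorization can be read representation-theoretically: $V_\mu\cong V_{\mu'}\otimes(\det)^{\mu_d}$ by highest-weight theory, and characters multiply under tensor products.

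I do not expect a genuine obstacle here once Weyl's formula is granted; the two points requiring care are matching the Weyl bialternant to the standard Schur-polynomial convention (the $d-j$ shift and the indexing of rows versus columns), and justifying the row-factoring together with the identity $\prod_i\la_i=\det(\La)$. The latter holds verbatim on diagonal matrices and extends to all of $\U(d)$ by the density of diagonalizable matrices and the continuity (indeed polynomiality) of the character, as noted in the first step.
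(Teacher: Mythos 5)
Your proposal is correct. Note that the paper itself offers no proof of this theorem --- it is stated purely as a citation of Bump's Theorem 38.2 and Proposition 38.2 --- and your derivation is exactly the standard argument behind those cited results: the Weyl bialternant $\det(\la_i^{\mu_j+d-j})/\det(\la_i^{d-j})$ reduces to the classical definition of $s_\mu$ when $\mu_d=0$, and factoring $\la_i^{\mu_d}$ out of each row of the numerator (equivalently, $V_\mu\cong V_{\mu'}\otimes(\det)^{\mu_d}$ by highest-weight theory) gives the twist formula in general. Two minor points of care. First, when $\mu_d<0$ the exponents $\mu_j+d-j$ may be negative, so both alternants are Laurent polynomials rather than polynomials; this is harmless on $\U(d)$, where every eigenvalue is a nonzero complex number and the row-factoring identity holds verbatim, but you should say explicitly that you are invoking the Weyl character formula for arbitrary dominant integral weights (nonincreasing integer sequences, possibly with negative entries), since that is precisely the generality the theorem requires. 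Second, your closing appeal to the density of diagonalizable matrices is superfluous: as you observe in your first step, every unitary matrix is diagonalizable by the spectral theorem, so the identity $\prod_i\la_i=\det(\La)$ and the whole computation hold at every point of $\U(d)$ with no limiting argument needed.
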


By way of example, the character of $V_{(1,0,\ldots,0)}$, the standard representation, is $\chi_{(1,0,\ldots,0)}(\La)= s_{(1,0,\ldots,0)}(\la_1,\ldots,\la_d) = \sum_i \la_i = \tr(\La)$, where $\la_i$ are the eigenvalues of $\La$. The dimension of $V_{(1,0,\ldots,0)}$ is $d$, so the first zonal orthogonal polynomial is
\[
Z_{1,U}(M) = d\tr(\La) = d\tr(U^*M).
\]

\begin{table}{\footnotesize\[
\begin{array}{|c|c|}\hline
\mu & \chi_\mu(\la_i) \\
\hline
(0,\ldots,0) & 1 \\
(1,0,\ldots,0) & \sum_i \la_i \\
(1,0,\ldots,0,-1) & \sum_{i,j}\frac{\la_i}{\la_j} - 1 \\
(2,0,\ldots,0,-1) & \sum \frac{\la_i\la_j}{2\la_k} + \sum \frac{\la_i^2}{2\la_j} - \sum \la_i \\
(1,1,0,\ldots,0,-1) & \sum \frac{\la_i\la_j}{2\la_k} - \sum \frac{\la_i^2}{2\la_j} - \sum \la_i \\
(2,0,\ldots,0,-2) & \sum \frac{\la_i\la_j}{4\la_k\la_l} + \sum \frac{\la_i^2}{4\la_k\la_l} + \sum \frac{\la_i\la_j}{4\la_k^2} + \sum \frac{\la_i^2}{4\la_k^2}
- \sum \frac{\la_i}{\la_j} \\
(2,0,\ldots,0,-1,-1) & \sum \frac{\la_i\la_j}{4\la_k\la_l} + \sum \frac{\la_i^2}{4\la_k\la_l} - \sum \frac{\la_i\la_j}{4\la_k^2} - \sum \frac{\la_i^2}{4\la_k^2}
- \sum \frac{\la_i}{\la_j} + 1 \\
(1,1,0,\ldots,0,-2) & \sum \frac{\la_i\la_j}{4\la_k\la_l} - \sum \frac{\la_i^2}{4\la_k\la_l} + \sum \frac{\la_i\la_j}{4\la_k^2} - \sum \frac{\la_i^2}{4\la_k^2}
- \sum \frac{\la_i}{\la_j} + 1 \\
(1,1,0,\ldots,0,-1,-1) & \sum \frac{\la_i\la_j}{4\la_k\la_l} - \sum \frac{\la_i^2}{4\la_k\la_l} - \sum \frac{\la_i\la_j}{4\la_k^2} + \sum \frac{\la_i^2}{4\la_k^2} - \sum \frac{\la_i}{\la_j} \\
\hline\hline
\mu & Z_{\mu,U}(M) \\
\hline
(0,\ldots,0) & 1 \\
(1,0,\ldots,0) & d\tr(\La) \\
(1,0,\ldots,0,-1) & (d^2-1)\bigl(\abs{\tr(\La)}^2-1\bigr) \\
(2,0,\ldots,0,-1) & \frac{d(d - 1)(d + 2)}{2}\Bigl(\frac{1}{2}[\tr(\La)^2 + \tr(\La^2)]\tr(\La^*) - \tr(\La)\Bigr) \\
(1,1,0,\ldots,0,-1) & \frac{d(d + 1)(d - 2)}{2}\Bigl(\frac{1}{2}[\tr(\La)^2 - \tr(\La^2)]\tr(\La^*) - \tr(\La)\Bigr) \\
(2,0,\ldots,0,-2) & \frac{d^2(d - 1)(d + 3)}{4}\left(\frac{1}{4}\!\abs{\tr(\La)^2 + \tr(\La^2)}^2 - \abs{\tr(\La)}^2 \right)\\
(2,0,\ldots,0,-1,-1) & \frac{(d^2 - 1)(d^2 - 4)}{4}\left(\frac{1}{4}[\tr(\La)^2+\tr(\La^2)][\overline{\tr(\La)^2-\tr(\La^2)}] - \abs{\tr(\La)}^2 + 1 \right)\\
(1,1,0,\ldots,0,-2) & \frac{(d^2 - 1)(d^2 - 4)}{4}\left(\frac{1}{4}[\tr(\La)^2-\tr(\La^2)][\overline{\tr(\La)^2+\tr(\La^2)}] - \abs{\tr(\La)}^2 + 1 \right)\\
(1,1,0,\ldots,0,-1,-1) & \frac{d^2(d + 1)(d - 3)}{4}\left(\frac{1}{4}\!\abs{\tr(\La)^2 - \tr(\La^2)}^2 - \abs{\tr(\La)}^2 \right)\\ 
\hline\end{array}
\]}
\caption{The first few characters and zonal orthogonal polynomials, for sufficiently large $d$, given in terms of $\La = U^*M$ and its eigenvalues $\lambda_i$.}
\label{tbl:zonals}\end{table}

\section{Relative bounds}
\label{sec:relbounds}

With zonal orthogonal polynomials in place, we can get a bound on the size of a unitary code which depends on the particular inner product values that occur. These bounds are called \defn{relative bounds}, in contrast with the absolute bound in Theorem \ref{thm:abscodebound} that depends only on the number of distinct inner product values. The proof, which in some sense uses linear programming, mimics the relative bounds of Delsarte \cite{Delsarte73} for codes in association schemes and the bounds of Delsarte, Goethals, and Seidel \cite{Delsarte75} for spherical codes. 

\begin{theorem}
\label{thm:relcodebound}
Let $F$ be a function of the eigenvalues of a matrix and $X$ a finite subset of $\U(d)$ such that $F(U^*M)$ is real and non-positive for every $U \neq M$ in $X$. If $F = \sum_{\abs{\mu}=0} c_\mu Z_\mu$, where $c_\mu \geq 0$ for every $\mu$ and $c_{(0,\ldots,0)} > 0$, then
\begin{equation}
|X| \leq F(I)/c_{(0,\ldots,0)}.
\label{eqn:relcodebound1}
\end{equation}
Moreover, if $c_\mu > 0$ for every $\abs{\mu_+} \leq t$, then equality holds in equation \eqref{eqn:relcodebound1} if and only if $X$ is a $t$-design.
\end{theorem}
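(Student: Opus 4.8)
The plan is to run the linear-programming (Delsarte-type) argument on the double sum $\sum_{U,M \in X} F(U^*M)$, bounding it from two sides. First I would isolate the diagonal: since $U^*U = I$,
\[
\sum_{U,M \in X} F(U^*M) = |X|\,F(I) + \sum_{U \neq M} F(U^*M) \leq |X|\,F(I),
\]
the inequality being the only place the hypothesis $F(U^*M)\le 0$ (for distinct $U,M$) is used. For the matching lower bound I would substitute the zonal expansion $F = \sum_{\abs{\mu}=0} c_\mu Z_\mu$ and use $Z_{(0,\ldots,0)}\equiv 1$:
\[
\sum_{U,M \in X} F(U^*M) = c_{(0,\ldots,0)}\,|X|^2 + \sum_{\mu \neq (0,\ldots,0)} c_\mu \sum_{U,M \in X} Z_{\mu,U}(M).
\]

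The technical heart is to show every inner sum $\sum_{U,M}Z_{\mu,U}(M)$ is nonnegative. Here I would reuse the identity $Z_{\mu,U}=\sum_{ij}\conj{f_{ij}(U)}f_{ij}$ from the proof of Lemma~\ref{lem:orthorel}, with $f_{ij}(M)=\sqrt{d_\mu}\,\rho_\mu(M)_{ij}$, which turns the sum into a sum of squares,
\[
\sum_{U,M \in X} Z_{\mu,U}(M) = \sum_{ij}\Bigl|\sum_{U \in X} f_{ij}(U)\Bigr|^2 \geq 0.
\]
Since also $c_\mu\ge 0$, this gives $\sum_{U,M}F(U^*M)\ge c_{(0,\ldots,0)}|X|^2$. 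Chaining the two estimates yields $c_{(0,\ldots,0)}|X|^2 \le |X|F(I)$, which is \eqref{eqn:relcodebound1}.

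For the equality clause, I would observe that equality in the bound forces equality in both estimates simultaneously: (a) $\sum_{U\ne M}F(U^*M)=0$, hence (each term being $\le 0$) $F(U^*M)=0$ for all distinct $U,M$; and (b) $\sum_{\mu\ne(0,\ldots,0)}c_\mu\sum_{U,M}Z_{\mu,U}(M)=0$. Since every summand in (b) is nonnegative and $c_\mu>0$ for all $\abs{\mu_+}\le t$, condition (b) is equivalent to $\sum_{U,M}Z_{\mu,U}(M)=0$ for each such $\mu$; by the sum-of-squares identity this says $\sum_{U\in X}\rho_\mu(U)=0$. Using that $\{\rho_\mu(U):U\in\U(d)\}$ spans the whole space of $d_\mu\times d_\mu$ matrices (the earlier lemma), the vanishing of this matrix is in turn equivalent to $\sum_{M\in X}Z_{\mu,U}(M)=d_\mu\tr\bigl(\rho_\mu(U)^*\sum_M\rho_\mu(M)\bigr)=0$ for every $U\in\U(d)$; ranging over all $\mu$ with $\abs{\mu}=0$ and $1\le\abs{\mu_+}\le t$, Corollary~\ref{cor:chartdes} identifies this with $X$ being a $t$-design.

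I expect the equality analysis to be the main obstacle, for two reasons. First, the nonnegativity of the moment sums must be established cleanly via the Schur-orthogonality expression for $Z_{\mu,U}$, since the whole biconditional rests on each summand in (b) being a genuine square. Second, the reverse implication ($t$-design $\Rightarrow$ equality) needs care: one must know that the expansion of $F$ contains no $Z_\mu$ with $\abs{\mu_+}>t$ (so that the design property annihilates every moment appearing in (b)), and one must separately secure condition (a), the off-diagonal vanishing of $F$ --- which is automatic precisely when $F$ is taken to vanish on the inner-product values realized in $X$. Keeping the support of $F$ and the two equality conditions aligned is where the bookkeeping has to be done carefully.
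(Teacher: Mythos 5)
Your proof is correct and takes essentially the same approach as the paper's: the same Delsarte-type two-sided estimate of $\sum_{U,M\in X}F(U^*M)$, with nonnegativity of each moment sum $\sum_{U,M\in X}Z_{\mu,U}(M)$ certified by the same Schur-orthogonality mechanism (your explicit sum of squares $\sum_{ij}\bigl|\sum_{U\in X}f_{ij}(U)\bigr|^2$ is exactly what underlies the paper's $\ip{\sum_{U\in X}Z_{\mu,U}}{\sum_{U\in X}Z_{\mu,U}}\geq 0$ via Lemma~\ref{lem:orthorel}), and the same passage from vanishing moments to the design property via Corollary~\ref{cor:chartdes}. The reverse implication you flag as delicate ($t$-design $\Rightarrow$ equality, which requires the expansion of $F$ to have no terms with $\abs{\mu_+}>t$ and $F$ to vanish on the off-diagonal inner products) is likewise left unaddressed by the paper, whose proof also establishes only that equality forces $X$ to be a $t$-design.
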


\begin{proof}
Since $F(U^*M) \leq 0$ for every $U \neq M$, summing over all $M \in X$, we have
\[
\sum_{M \in X} F(U^*M) \leq F(U^*U) = F(I).
\]
Then averaging over all $U \in X$,
\begin{align*}
F(I) & \geq \frac{1}{|X|}\sum_{U,M \in X} F(U^*M) \\
& = \frac{1}{|X|} \sum_\mu c_\mu \sum_{U,M \in X} Z_\mu(U^*M). 
\end{align*}
Using Lemma \ref{lem:orthorel}, the inner sum is non-negative for $\mu \neq (0,\ldots,0)$:
\[
\sum_{U,M \in X} Z_{\mu,U}(M) = \sum_{U,M \in X} \ip{Z_{\mu,M}}{Z_{\mu,U}} = \ip{\sum_{U \in X}Z_{\mu,U}}{\sum_{U \in X}Z_{\mu,U}} \geq 0.
\]
On the other hand $Z_{(0,\ldots,0)}(U^*M) = 1$ for all $U$ and $M$. Therefore,
\begin{align*}
F(I) & \geq \frac{1}{|X|} c_{(0,\ldots,0)} \sum_{U,M \in X} 1 \\
& = c_{(0,\ldots,0)} |X|, 
\end{align*} 
and equation \eqref{eqn:relcodebound1} follows. If equality holds and $c_\mu > 0$, then 
\[
\sum_{U \in X}Z_{\mu,U}(M) = 0
\]
for each $\mu \neq (0,\ldots,0)$, which implies
\[
\sum_{U \in X}Z_{\mu,M}(U) = 0,
\]
and so $X$ is a $t$-design by Corollary \ref{cor:chartdes}. 
\end{proof}

Similarly, one can show that if $\tr(M^*U)F(U^*M) \leq 0$ for every $U \neq M$ in $X$, and $F = \sum_{\abs{\mu}=1} c_\mu Z_\mu$ such that $c_\mu \geq 0$ and $c_{(1,0,\ldots,0)} > 0$, then
\[
|X| \leq F(I)/c_{(1,0,\ldots,0)}.
\]
The next two corollaries are examples of how Theorem \ref{thm:relcodebound} can be applied.

\begin{corollary}
\label{cor:rel1distbound}
If $X$ is a unitary $1$-distance set with $\abs{\tr(U^*M)}^2 = \al < 1$ for all $U \neq M$ in $X$, then
\[
|X| \leq \frac{d^2-\al}{1-\al}.
\]
Equality holds if and only if $X$ is a $1$-design.
\end{corollary}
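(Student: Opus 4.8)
The plan is to invoke the relative code bound, Theorem~\ref{thm:relcodebound}, with an $F$ assembled from the lowest-degree zonal orthogonal polynomials. Because $X$ is a $1$-distance set and I want the equality case to characterize $1$-designs, the relevant zonals are precisely those $Z_\mu$ appearing in $\Hom(1,1)$, namely $Z_{(0,\ldots,0)}$ and $Z_{(1,0,\ldots,0,-1)}$ (the only $\mu$ with $\abs{\mu}=0$ and $\abs{\mu_+}\leq 1$). From Table~\ref{tbl:zonals}, $Z_{(0,\ldots,0)}(\La)=1$ and $Z_{(1,0,\ldots,0,-1)}(\La)=(d^2-1)(\abs{\tr(\La)}^2-1)$.

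First I would take $F := c_0 Z_{(0,\ldots,0)} + Z_{(1,0,\ldots,0,-1)}$ and fix the constant $c_0$ so that $F$ vanishes at the single distance value $\al$. This forces $c_0 = (d^2-1)(1-\al)$ and collapses $F$ to the clean form $F(\La)=(d^2-1)(\abs{\tr(\La)}^2-\al)$. Since every $U\neq M$ in $X$ satisfies $\abs{\tr(U^*M)}^2=\al$, we get $F(U^*M)=0\leq 0$, meeting the hypothesis of Theorem~\ref{thm:relcodebound}; and with $\al<1$ and $d\geq 2$ both coefficients $c_0$ and $c_{(1,0,\ldots,0,-1)}=1$ are strictly positive (the case $d=1$ is vacuous, as no such set exists on the unit circle).

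Theorem~\ref{thm:relcodebound} then gives $\abs{X}\leq F(I)/c_0$. Evaluating at the identity, where $\abs{\tr(I)}^2=d^2$, yields $F(I)=(d^2-1)(d^2-\al)$; dividing by $c_0=(d^2-1)(1-\al)$ and cancelling $(d^2-1)$ delivers exactly $\abs{X}\leq(d^2-\al)/(1-\al)$. For the equality statement I would appeal to the second half of Theorem~\ref{thm:relcodebound}: since $c_\mu>0$ for both $\mu$ with $\abs{\mu_+}\leq 1$, equality holds if and only if $X$ is a $1$-design. There is no genuine obstacle here beyond the opening choice of $F$; the only point demanding care is the coefficient positivity, which is exactly where the hypothesis $\al<1$ enters.
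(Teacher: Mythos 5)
Your proposal is correct and follows essentially the same route as the paper: both take $F$ to be the annihilator of $\al$ (yours is just the paper's $F(\la)=\abs{\sum_i\la_i}^2-\al$ rescaled by the harmless positive factor $d^2-1$), expand it in the zonal basis $Z_{(0,\ldots,0)}$, $Z_{(1,0,\ldots,0,-1)}$, and apply Theorem~\ref{thm:relcodebound}, with the equality case read off from the positivity of both coefficients. The only difference is that you spell out the equality clause and the role of $\al<1$ explicitly, which the paper leaves implicit.
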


\begin{proof}
Take $F$ to be the annihilator of $\al$, $F(\la) = \abs{\sum_i \la_i}^2 - \al$, so that $F(U^*M) = 0$ for every $U,M \in X$. Since $Z_{(0,\ldots,0)} = 1$ and $Z_{(1,0,\ldots,0,-1)} = (d^2-1)(\abs{\sum_i \la_i}^2 - 1)$ we may write
\[
F = \frac{1}{d^2-1}Z_{(1,0,\ldots,0,-1)} + (1-\al)Z_{(0,\ldots,0)}
\]
and so by Theorem \ref{thm:relcodebound}
\[
|X| \leq \frac{F(I)}{c_0} = \frac{d^2-\al}{1-\al}.
\]
\end{proof}

More generally, for any $X \subset \U(d)$ let 
\[
\al := \max_{U \neq M \in X} \abs{\tr(U^*M)}^2.
\]
If $\al < 1$, then the argument of Corollary \ref{cor:rel1distbound} shows that $|X| \leq \frac{d^2-\al}{1-\al}$, with equality if and only if $X$ is an equiangular $1$-design. In particular, when $\al = \frac{d^2-2}{d^2-1}$, we find that $|X| \leq d^4 - 2d^2+2$ (the bound in Theorem \ref{thm:absdesbound} for $2$-designs).


\begin{corollary}
\label{cor:rel2distbound}
Let $X$ be a unitary $2$-distance set with $\abs{\tr(U^*M)}^2 \in \{\al,\be\}$ for all $U \neq M$ in $X$, such that
\begin{align*}
\al + \be & \leq 4, \\
\al + \be & < \al\be + 2.
\end{align*}
Then
\[
|X| \leq \frac{(d^2-\al)(d^2-\be)}{\al\be - \al - \be + 2}.
\]
Equality holds if and only if $X$ is a $2$-design.
\end{corollary}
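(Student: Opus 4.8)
The plan is to mimic the proof of Corollary \ref{cor:rel1distbound}, applying Theorem \ref{thm:relcodebound} with $F$ taken to be the annihilator of both nontrivial inner product values. Writing $x = \abs{\tr(\La)}^2$ for $\La = U^*M$, I would set
\[
F(\La) = \bigl(\abs{\tr(\La)}^2 - \al\bigr)\bigl(\abs{\tr(\La)}^2 - \be\bigr) = x^2 - (\al+\be)x + \al\be .
\]
Since $X$ is a $2$-distance set with inner products in $\{\al,\be\}$, we have $F(U^*M) = 0 \leq 0$ for every $U \neq M$ in $X$, so the sign hypothesis of Theorem \ref{thm:relcodebound} holds. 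Because $F$ lies in $\Hom(2,2)$ and depends only on the spectrum of $\La$, it expands uniquely as $F = \sum_{\abs{\mu}=0,\,\abs{\mu_+}\leq 2} c_\mu Z_\mu$; the task is to compute the $c_\mu$ and confirm the sign conditions $c_\mu \geq 0$ and $c_{(0,\ldots,0)} > 0$.

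To find the expansion I would read the six relevant zonal polynomials ($\abs{\mu}=0$, $\abs{\mu_+}\in\{0,1,2\}$) off Table \ref{tbl:zonals}. The key observation is that $F$ contains no $\tr(\La^2)$ terms, whereas each of the four degree-two zonals ($\mu = (2,0,\ldots,0,-2)$, $(2,0,\ldots,0,-1,-1)$, $(1,1,0,\ldots,0,-2)$, $(1,1,0,\ldots,0,-1,-1)$) does. Expanding the bracketed expressions $\abs{\tr(\La)^2\pm\tr(\La^2)}^2$ and $[\tr(\La)^2\pm\tr(\La^2)]\,\overline{[\tr(\La)^2\mp\tr(\La^2)]}$, I expect that choosing the four coefficients so that each $c_\mu$ times its leading constant from the table takes a common value forces every $\tr(\La^2)$-dependent term to cancel, leaving exactly $x^2 - 4x + 2$. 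Normalizing so that the $x^2$ coefficient agrees with that of $F$ pins down this common value, and all four of these $c_\mu$ are then positive, since the leading constants in the table are positive for $d$ large enough.

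With the degree-two part contributing $x^2 - 4x + 2$, matching the remaining linear and constant terms of $F$ against $Z_{(1,0,\ldots,0,-1)} = (d^2-1)(x-1)$ and $Z_{(0,\ldots,0)} = 1$ should give
\[
c_{(1,0,\ldots,0,-1)} = \frac{4-\al-\be}{d^2-1}, \qquad c_{(0,\ldots,0)} = \al\be - \al - \be + 2 .
\]
The two hypotheses then read precisely as the two required sign conditions: $\al+\be\leq 4$ makes $c_{(1,0,\ldots,0,-1)}\geq 0$, and $\al+\be < \al\be + 2$ makes $c_{(0,\ldots,0)} > 0$. Setting all eigenvalues to $1$ (so $x=d^2$) gives $F(I) = (d^2-\al)(d^2-\be)$, and Theorem \ref{thm:relcodebound} yields $\abs{X}\leq F(I)/c_{(0,\ldots,0)}$, which is the claimed bound; the equality-versus-$2$-design statement follows from the second part of that theorem once every $c_\mu$ is positive.

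The main obstacle I anticipate is the bookkeeping of the second paragraph: verifying that the $\tr(\La^2)$ contributions of the four degree-two zonals genuinely cancel under the equal-weighting choice, and that what survives is the clean polynomial $x^2-4x+2$. This is a finite but delicate symmetric-function computation, and it is also where positivity of the four leading constants (and hence any implicit restriction on $d$) must be checked. A minor point to address is the boundary case $\al+\be = 4$, where $c_{(1,0,\ldots,0,-1)}$ vanishes and the ``only if'' direction of the equality characterization needs slight extra care, since Theorem \ref{thm:relcodebound} delivers the design conclusion only for those $\mu$ with $c_\mu$ strictly positive.
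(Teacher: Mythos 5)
Your proposal follows the paper's proof essentially verbatim: the paper takes the same annihilator $F$, forms exactly your equal-weight combination of the four degree-two zonals (calling it $F_2$ and asserting the identity $F_2 = x^2 - 4x + 2$, which the cancellation you anticipate does indeed verify), and reads off the same coefficients $c_{(1,0,\ldots,0,-1)} = (4-\al-\be)/(d^2-1)$ and $c_{(0,\ldots,0)} = \al\be - \al - \be + 2$ before applying Theorem \ref{thm:relcodebound}. Your caveat about the boundary case $\al+\be = 4$, where the theorem only yields the equality-implies-design direction for those $\mu$ with $c_\mu$ strictly positive, is a legitimate subtlety that the paper's own proof glosses over as well.
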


\begin{proof}
Take $F(\la) = (\abs{\sum_i \la_i}^2 - \al)(\abs{\sum_i \la_i}^2 - \be)$, the annihilator of $\al$ and $\be$, and write it in terms of zonal orthogonal polynomials. Define
\begin{align*}
F_2(\la) & := \frac{Z_{(2,0,\ldots,0,-2)}}{D_{(2,0,\ldots,0,-2)}} + \frac{Z_{(2,0,\ldots,0,-1,-1)}}{D_{(2,0,\ldots,0,-1,-1)}} + \frac{Z_{(1,1,0,\ldots,0,-2)}}{D_{(1,1,0,\ldots,0,-2)}} + \frac{Z_{(1,1,0,\ldots,0,-1,-1)}}{D_{(1,1,0,\ldots,0,-1,-1)}} \\
& = \sum_{i,j,k,l} \frac{\la_i\la_j}{\la_k\la_l} - 4\sum_{i,k} \frac{\la_i}{\la_k} + 2.
\end{align*}
Therefore,
\begin{align*}
F(\la) & = \sum_{i,j,k,l} \frac{\la_i\la_j}{\la_k\la_l} - (\al+\be)\sum_{i,k} \frac{\la_i}{\la_k} + \al\be \\
& = F_2(\la) + \frac{(4-\al-\be)Z_{(1,0\ldots,0,-1)}(\la)}{D_{(1,0\ldots,0,-1)}} + (\al\be - \al - \be +2).
\end{align*}
Applying Theorem \ref{thm:relcodebound}, 
\[
|X| \leq \frac{F(I)}{c_{(1,0\ldots,0)}} = \frac{(d^2-\al)(d^2-\be)}{\al\be - \al - \be +2}.
\]
\end{proof}

In particular, if $X$ is the union of a set of mutually unbiased unitary-operator bases \cite[Theorem 5.6]{Scott08}, then $\{\al,\be\} = \{0,1\}$ and 
\[
|X| \leq d^2(d^2-1),
\]
with equality if and only if $X$ is a $2$-design.

Every $s$-distance set in $\U(d)$ is also (up to scaling) an $s$-distance set in the complex space $\cx^{d^2}$. When $s = 1$, the bound in Corollary \ref{cor:rel1distbound} coincides with the bound for complex lines due to Delsarte, Goethals and Seidel \cite[Tables 1 \& 2]{Delsarte75}. However, the upper bounds in Theorem \ref{thm:abscodebound} and Corollary \ref{cor:rel2distbound} for sets in $\U(d)$ are smaller than the upper bounds for sets in $\cx^{d^2}$.

There is also a (less useful) relative bound for designs.
\begin{theorem}
\label{thm:reldesbound}
Let $F$ be a symmetric function of the eigenvalues of a matrix, suppose $X$ is a unitary $t$-design such that $F(U^*M) \geq 0$ for every $U \neq M$ in $X$. If $F = \sum_{\abs{\mu}=0} c_\mu Z_\mu$, where $c_{(0,\ldots,0)} > 0$ and $c_\mu \leq 0$ for every $\abs{\mu_+} > t$, then
\[
|X| \geq F(I)/c_{(0,\ldots,0)}.
\]
Equality holds if and only if $F(U^*M) = 0$ for every $U \neq M$ in $X$ and $X$ is a $\abs{\mu_+}$-design for every $c_\mu > 0$.
\end{theorem}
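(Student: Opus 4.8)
The plan is to mirror the proof of the relative code bound (Theorem~\ref{thm:relcodebound}) with all inequalities reversed: there, non-positivity of $F$ off the diagonal combined with non-negative coefficients $c_\mu$ produced an upper bound on $|X|$; here, non-negativity of $F$ off the diagonal combined with non-positive high-degree coefficients $c_\mu$, together with the standing hypothesis that $X$ is a $t$-design, will produce a lower bound. The quantity to analyze is again the symmetric double average $\frac{1}{|X|}\sum_{U,M\in X}F(U^*M)$, which I will sandwich between $F(I)$ from below and $c_{(0,\ldots,0)}|X|$ from above.

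For the lower bound I would split off the diagonal: for each fixed $U$, $\sum_{M\in X}F(U^*M)=F(I)+\sum_{M\neq U}F(U^*M)\geq F(I)$ since every off-diagonal term is non-negative, and averaging over $U$ gives $\frac{1}{|X|}\sum_{U,M}F(U^*M)\geq F(I)$. For the upper bound I would expand $F=\sum_{\abs{\mu}=0}c_\mu Z_\mu$ and evaluate term by term. The index $\mu=(0,\ldots,0)$ contributes $c_{(0,\ldots,0)}|X|$ because $Z_{(0,\ldots,0)}\equiv1$. For every other $\mu$, Lemma~\ref{lem:orthorel} gives
\[
\sum_{U,M\in X}Z_\mu(U^*M)=\ip{\sum_{U\in X}Z_{\mu,U}}{\sum_{U\in X}Z_{\mu,U}}\geq0,
\]
and since $X$ is a $t$-design, Corollary~\ref{cor:chartdes} makes $\sum_{U}Z_{\mu,U}=0$, hence this quantity zero, for all $\mu$ with $\abs{\mu_+}\in[1,t]$. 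Only the indices with $\abs{\mu_+}>t$ survive, and there $c_\mu\leq0$ multiplies a non-negative quantity, so each contributes at most zero. Thus $\frac{1}{|X|}\sum_{U,M}F(U^*M)\leq c_{(0,\ldots,0)}|X|$, and chaining with the lower bound gives $F(I)\leq c_{(0,\ldots,0)}|X|$, i.e.\ the asserted inequality.

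The equality clause is where I expect the real work. Equality forces both sandwich bounds to close. The lower one closes exactly when $\sum_{M\neq U}F(U^*M)=0$ for each $U$, and as the summands are non-negative this is equivalent to $F(U^*M)=0$ for all $U\neq M$ in $X$. The upper one closes exactly when $\sum_{\abs{\mu_+}>t}c_\mu\ip{\sum_U Z_{\mu,U}}{\sum_U Z_{\mu,U}}=0$; since every term is non-positive, each must vanish separately, forcing $\sum_{U}Z_{\mu,U}=0$ for every high-degree index carrying a nonzero (hence strictly negative) coefficient. The main obstacle is the final repackaging: these are pointwise conditions on the individual indices $\mu$, whereas the clean statement ``$X$ is a $k$-design'' demands that the zonal sums vanish for the entire downward-closed family $\abs{\mu_+}\leq k$, not merely for the indices that happen to carry a coefficient. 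The translation is transparent only when the active coefficients occupy a full initial segment of degrees above $t$; I would therefore first phrase the equality condition directly as the vanishing $\sum_U Z_{\mu,U}=0$ for the relevant $\mu$ (via Corollary~\ref{cor:chartdes}) and only then specialize to design strength, taking care that it is the set of indices with $c_\mu\neq0$ that controls which design conditions are actually enforced.
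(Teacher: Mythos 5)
The paper states Theorem~\ref{thm:reldesbound} without proof (it is followed immediately by its corollaries), so the only benchmark is the paper's proof of Theorem~\ref{thm:relcodebound}, which you explicitly set out to dualize --- and your argument does this correctly and is surely the intended one. Your sandwich
\[
F(I)\;\leq\;\frac{1}{|X|}\sum_{U,M\in X}F(U^*M)\;\leq\;c_{(0,\ldots,0)}|X|
\]
uses exactly the paper's two ingredients: the non-negativity $\sum_{U,M\in X}Z_{\mu,U}(M)=\ip{\sum_{U\in X}Z_{\mu,U}}{\sum_{U\in X}Z_{\mu,U}}\geq 0$ coming from Lemma~\ref{lem:orthorel}, and Corollary~\ref{cor:chartdes} to annihilate the indices with $1\leq\abs{\mu_+}\leq t$; the one structural difference from the code bound, which you handle correctly, is that here the $t$-design property is a hypothesis that kills the low-degree terms, while the sign condition $c_\mu\leq 0$ is only needed above degree $t$.

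Your scruple about the equality clause is well founded, and your resolution is the right one. What the argument actually proves is: equality holds iff (i) $F(U^*M)=0$ for all $U\neq M$ in $X$, and (ii) $\sum_{U\in X}Z_{\mu,U}=0$ for every $\mu$ with $\abs{\mu_+}>t$ and $c_\mu<0$. The theorem's clause ``$X$ is a $\abs{\mu_+}$-design for every $c_\mu>0$'' is automatic under its own hypotheses (positive coefficients occur only at $\abs{\mu_+}\leq t$, where the $t$-design hypothesis already applies), so read literally the statement asserts equality iff (i) alone, which is too weak precisely when some $c_\mu<0$ occurs above degree $t$; conversely, demanding that $X$ be a $\abs{\mu_+}$-design for every $\mu$ with $c_\mu\neq 0$ is sufficient but stronger than (ii), since a $k$-design condition forces vanishing for all indices of degree at most $k$, not just those carrying coefficients. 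In every application the paper makes of this theorem (Corollaries~\ref{cor:reldesbound1} and~\ref{cor:reldesbound2}, and the tight-design corollary) one has $F\in\Hom(t,t)$, so no coefficients above degree $t$ exist and the discrepancy is invisible; but your choice to state the equality condition via the vanishing zonal sums for the indices that actually carry coefficients is the correct precise form.
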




Again we give two examples.

\begin{corollary}
\label{cor:reldesbound1}
If $\al < 1$ and $X$ is a unitary $1$-design such that $\abs{\tr(U^*M)}^2 \geq \al$ for all $U \neq M$ in $X$, then 
\[
|X| \geq \frac{d^2-\al}{1-\al},
\]
with equality if and only if $\abs{\tr(U^*M)}^2 = \al$ for all $U$ and $M$.
\end{corollary}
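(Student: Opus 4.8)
The plan is to read this off Theorem~\ref{thm:reldesbound} exactly as Corollary~\ref{cor:rel1distbound} was read off Theorem~\ref{thm:relcodebound}; the two results are mirror images using the same annihilator, so I would reuse that function. Concretely, I would set $F(\la) = \abs{\sum_i \la_i}^2 - \al$, so that when the $\la_i$ are the eigenvalues of $U^*M$ we have $F(U^*M) = \abs{\tr(U^*M)}^2 - \al$. The nonnegativity hypothesis of Theorem~\ref{thm:reldesbound} is then immediate: by assumption $\abs{\tr(U^*M)}^2 \geq \al$ for every $U \neq M$ in $X$, so $F(U^*M) \geq 0$ on all off-diagonal pairs.

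Next I would expand $F$ in zonal orthogonal polynomials. Using $Z_{(0,\ldots,0)} = 1$ and $Z_{(1,0,\ldots,0,-1)} = (d^2-1)(\abs{\sum_i \la_i}^2 - 1)$ from Table~\ref{tbl:zonals}, the expansion is
\[
F = \frac{1}{d^2-1} Z_{(1,0,\ldots,0,-1)} + (1-\al) Z_{(0,\ldots,0)},
\]
so that $c_{(0,\ldots,0)} = 1-\al > 0$ (since $\al < 1$) and the only other nonzero coefficient is $c_{(1,0,\ldots,0,-1)} = 1/(d^2-1) > 0$. The step I would flag as the one requiring care — the real content of matching this to Theorem~\ref{thm:reldesbound} — is the index condition $c_\mu \leq 0$ whenever $\abs{\mu_+} > t$. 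Here $t = 1$, and for the single nontrivial term $\mu = (1,0,\ldots,0,-1)$ we have $\mu_+ = (1)$, hence $\abs{\mu_+} = 1 = t$. Thus $\abs{\mu_+}$ never exceeds $t$ among the terms present, and the requirement holds vacuously. This is precisely why a $1$-design is enough: the annihilator of a single inner-product value lives entirely in degree at most $t=1$.

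Finally I would evaluate at the identity, where all eigenvalues equal $1$, giving $F(I) = d^2 - \al$, so Theorem~\ref{thm:reldesbound} yields
\[
|X| \geq \frac{F(I)}{c_{(0,\ldots,0)}} = \frac{d^2 - \al}{1-\al}.
\]
For the equality clause, Theorem~\ref{thm:reldesbound} demands both that $F(U^*M) = 0$ for all $U \neq M$, i.e.\ $\abs{\tr(U^*M)}^2 = \al$ throughout, and that $X$ be an $\abs{\mu_+}$-design for every $\mu$ with $c_\mu > 0$. The positive coefficients occur only at $\abs{\mu_+} \in \{0,1\}$, and $X$ is already a $1$-design (hence a $0$-design), so that second condition is automatic and contributes nothing new. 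I therefore expect no genuine obstacle: the argument is a direct specialization, and the only subtlety is verifying that the degree-one index $\abs{\mu_+} = 1$ sits exactly at the threshold $t=1$ rather than above it, which is what makes the hypotheses of Theorem~\ref{thm:reldesbound} applicable.
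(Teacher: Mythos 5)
Your proposal is correct and is exactly the paper's argument: the paper's proof of Corollary~\ref{cor:reldesbound1} is literally ``take $F$ to be the annihilator of $\al$, as in Corollary~\ref{cor:rel1distbound}, and apply Theorem~\ref{thm:reldesbound},'' which is what you do, with the zonal expansion, the check that the only nontrivial coefficient sits at $\abs{\mu_+}=1=t$, and the equality analysis spelled out in full. No gaps; you have simply made explicit the details the paper leaves to the reader.
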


\begin{proof}
Take $F$ to be the annihilator of $\al$, as in Corollary \ref{cor:rel1distbound}. Then apply Theorem \ref{thm:reldesbound}. 
\end{proof}

\begin{corollary}
\label{cor:reldesbound2}
Let $\al$ and $\be$ satisfy
\begin{align*}
\al & < \be, \\
\al + \be & \leq 4, \\
\al + \be & < \al\be + 2.
\end{align*}
If $X$ is a $2$-design such that $\abs{\tr(U^*M)}^2 \leq \al$ or $\abs{\tr(U^*M)}^2 \geq \be$ for all $U$ and $M$ in $X$, then 
\[
|X| \geq \frac{(d^2-\al)(d^2-\be)}{\al\be - \al - \be + 2},
\]
with equality if and only if $\abs{\tr(U^*M)}^2 \in \{\al,\be\}$ for all $U$ and $M$. 
\end{corollary}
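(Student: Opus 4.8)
The plan is to run the argument of Corollary~\ref{cor:rel2distbound} through the design bound (Theorem~\ref{thm:reldesbound}) rather than the code bound, using the very same annihilator $F(\la) = (\abs{\sum_i \la_i}^2 - \al)(\abs{\sum_i \la_i}^2 - \be)$, so that $F(U^*M) = (\abs{\tr(U^*M)}^2 - \al)(\abs{\tr(U^*M)}^2 - \be)$. The first thing I would verify is the sign hypothesis of Theorem~\ref{thm:reldesbound}: for $U \neq M$ the assumption forces $\abs{\tr(U^*M)}^2 \leq \al$ or $\abs{\tr(U^*M)}^2 \geq \be$, and since $\al < \be$ the two factors of $F$ then have the same sign, giving $F(U^*M) \geq 0$. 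This is precisely the mirror image of the situation in Corollary~\ref{cor:rel2distbound}, where the identical $F$ was non-positive off the diagonal.

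Next I would reuse the zonal expansion already computed in the proof of Corollary~\ref{cor:rel2distbound},
\[
F = F_2 + \frac{4-\al-\be}{D_{(1,0,\ldots,0,-1)}}\,Z_{(1,0,\ldots,0,-1)} + (\al\be - \al - \be + 2),
\]
where $F_2$ is a positive combination of the four zonal polynomials indexed by $\abs{\mu_+} = 2$. The key structural observation is that every $\mu$ appearing here has $\abs{\mu_+} \leq 2 = t$, so there are no terms with $\abs{\mu_+} > t$ and the hypothesis ``$c_\mu \leq 0$ for $\abs{\mu_+} > t$'' of Theorem~\ref{thm:reldesbound} holds vacuously. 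The only remaining coefficient condition is $c_{(0,\ldots,0)} = \al\be - \al - \be + 2 > 0$, which is exactly the third assumption $\al + \be < \al\be + 2$. Theorem~\ref{thm:reldesbound} then yields $|X| \geq F(I)/c_{(0,\ldots,0)}$; evaluating at $\La = I$ gives $\abs{\tr(I)}^2 = d^2$ in each factor, so $F(I) = (d^2 - \al)(d^2 - \be)$, and the stated bound follows.

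For the equality clause I would invoke the corresponding part of Theorem~\ref{thm:reldesbound}: equality holds if and only if $F(U^*M) = 0$ for every $U \neq M$ and $X$ is a $\abs{\mu_+}$-design for every $\mu$ with $c_\mu > 0$. The first condition says exactly $\abs{\tr(U^*M)}^2 \in \{\al,\be\}$ for all $U \neq M$, which is the claimed criterion. The second is automatic: the strictly positive coefficients occur only at $\abs{\mu_+} \in \{1,2\}$ (the degree-one coefficient being positive precisely when $\al + \be < 4$), and $X$ is assumed to be a $2$-design, hence also a $1$-design, so both design conditions already hold.

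Since this is essentially a sign-flipped rerun of Corollary~\ref{cor:rel2distbound} via the design bound, no step is computationally demanding; the only point requiring care is correctly matching hypotheses to Theorem~\ref{thm:reldesbound}. The subtle observation I would flag is that the design property (through Corollary~\ref{cor:chartdes}) annihilates every intermediate term with $1 \leq \abs{\mu_+} \leq t$, so the sign of the degree-one coefficient is irrelevant to the inequality itself. In particular the assumption $\al + \be \leq 4$, which was indispensable in the code bound to keep that coefficient non-negative, is not needed for the inequality here and enters only through the equality analysis; I would note this explicitly so that it is not mistaken for an essential ingredient of the bound.
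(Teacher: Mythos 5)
Your proof is correct and is precisely the paper's (implicit) argument: just as Corollary~\ref{cor:reldesbound1} is proved by reusing the annihilator from Corollary~\ref{cor:rel1distbound}, this corollary follows by taking the two-value annihilator and its zonal expansion from Corollary~\ref{cor:rel2distbound} and feeding them into Theorem~\ref{thm:reldesbound}, exactly as you do. One small refinement of your closing remark: $\al+\be\leq 4$ does not really enter the equality analysis either, since every nonconstant $\mu$ in the expansion has $\abs{\mu_+}\leq 2$ and a $2$-design is automatically a $1$-design, so the sign of the degree-one coefficient is immaterial throughout --- as your own middle paragraph in fact shows.
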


In particular, if $X$ is a $2$-design such that $\abs{\tr(U^*M)}^2 = 0$ or $\abs{\tr(U^*M)}^2 \geq \be$ for all $U$ and $M$ in $X$, then 
\[
|X| \geq \frac{d^2(d^2-\be)}{2-\be},
\]
with equality if and only if $\abs{\tr(U^*M)}^2 \in \{0,\be\}$ for all $U$ and $M$.

A unitary $t$-design $X$ is \defn{tight} if it satisfies the bound in Theorem \ref{thm:absdesbound} with equality:
\[
|X| = \dimHomU{d}{\lceil t/2 \rceil}{\lfloor t/2 \rfloor}.
\]
Other types of tight designs, such as tight spherical designs or tight complex projective designs, play an important role in algebraic combinatorics \cite{Delsarte75} and quantum information \cite{Renes04}. They are less significant here as they appear to be very rare: numerical searches have not revealed any tight $t$-designs at all for $t > 1$. Nevertheless, if they do exist, Theorem \ref{thm:reldesbound} gives some necessary conditions for their structure. 

\begin{corollary}
For $r = \lceil t/2 \rceil$ and $s = \lfloor t/2 \rfloor$, define
\[
F_t(\la) := 
\sum_{\substack{\abs{\mu} = r-s \\ \abs{\mu_+} \leq r}} Z_\mu(\la).
\]
If $X$ is a $t$-design of minimal size $\dimHomU{d}{r}{s}$, then $F_t$ is an annihilator of $X$, in the sense that
$F_t(U^*M) = 0$ for every $U \neq M$ in $X$.
\end{corollary}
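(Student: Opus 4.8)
The plan is to recognize $F_t(U^*M)$ as the reproducing kernel of the space $\Hom(r,s)$ and then exploit the fact that a tight design makes the associated evaluation matrix square. Write $N := \dimHomU{d}{r}{s}$ and let $\{S_1,\dots,S_N\}$ be the orthonormal basis of $\Hom(r,s)$ obtained by collecting, over all $\mu$ with $\abs{\mu}=r-s$ and $\abs{\mu_+}\le r$, the scaled matrix-coefficient functions $f_{ij}(M)=\sqrt{d_\mu}\,\rho_\mu(M)_{ij}$ (orthonormal by Schur orthogonality, exactly as in the lemma computing $\dim M_\mu$). The computation inside the proof of Lemma \ref{lem:orthorel} gives $Z_{\mu,U}(M)=\sum_{ij}\overline{f_{ij}(U)}f_{ij}(M)$, so summing over the relevant $\mu$ yields the reproducing-kernel identity
\[
F_t(U^*M)=\sum_{i=1}^N \overline{S_i(U)}\,S_i(M).
\]

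Next I would encode this with a matrix. List $X=\{U_1,\dots,U_n\}$ and let $E$ be the $N\times n$ matrix with entries $E_{ia}=S_i(U_a)$. By the displayed identity, $(E^*E)_{ab}=F_t(U_a^*U_b)$, so the off-diagonal vanishing we want is exactly the statement that $E^*E$ is diagonal. First I would compute the companion product $EE^*$: its $(i,j)$ entry is $\sum_a S_i(U_a)\overline{S_j(U_a)}=n\ip{S_j}{S_i}_X$. The key point is that $\overline{S_j}S_i$ lies in $\Hom(t,t)$: in the even case $r=s=t/2$ and the product has bidegree $(t,t)$, while in the odd case $r=s+1=(t+1)/2$ and the product again has bidegree $(t,t)$. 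Since $X$ is a $t$-design, $\ip{S_j}{S_i}_X=\ip{S_j}{S_i}=\de_{ij}$, whence $EE^*=n\,I_N$.

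Finally I would invoke tightness. The hypothesis $|X|=\dimHomU{d}{r}{s}$ means $n=N$, so $E$ is square; for a square matrix, $EE^*=n\,I_N$ forces $E^*E=n\,I_N$ as well (indeed $\det E\ne0$, so $E^*=nE^{-1}$). Reading off the off-diagonal entries gives $F_t(U_a^*U_b)=0$ for $a\ne b$, which is the claim. As a consistency check, the diagonal entries give $F_t(I)=n$, matching $F_t(I)=\sum_\mu Z_\mu(I)=\sum_\mu {d_\mu}^2=N$ from the dimension theorem.

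I expect the only real content to be the two bookkeeping facts: identifying $F_t(U^*M)$ as the reproducing kernel of $\Hom(r,s)$, and checking $\overline{S_j}S_i\in\Hom(t,t)$ in both parities so that the $t$-design property can be applied. Everything else is linear algebra. An alternative to the square-matrix step is to compare $\tr((E^*E)^2)=\tr((EE^*)^2)=n^2N=n^3$ with $\sum_{a,b}\abs{F_t(U_a^*U_b)}^2\ge\sum_a \abs{F_t(I)}^2=n\cdot n^2=n^3$, which again forces every off-diagonal term to vanish; this route sidesteps squareness but is otherwise the same idea.
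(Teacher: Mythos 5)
Your proof is correct, but it takes a genuinely different route from the paper. The paper deduces the corollary from its relative bound for designs (Theorem~\ref{thm:reldesbound}): it sets $G_t = \abs{F_t}^2 \in \Hom(t,t)$, notes $G_t \geq 0$ off the diagonal, uses Schur-polynomial relations to show that the constant zonal coefficient of $G_t$ is $c_{(0,\ldots,0)} = F_t(I)$, and then observes that the tight design attains equality in $|X| \geq G_t(I)/c_{(0,\ldots,0)} = F_t(I) = \dimHomU{d}{r}{s}$, which forces $G_t(U^*M)=0$ off the diagonal. You instead run a reproducing-kernel/Gram-matrix argument: the identity $F_t(U^*M)=\sum_i \overline{S_i(U)}S_i(M)$ follows from the computation inside Lemma~\ref{lem:orthorel}, the row-orthogonality $EE^* = nI_N$ is exactly the content of the proof of the absolute bound (Theorem~\ref{thm:absdesbound}), and tightness makes $E$ square so that column orthogonality $E^*E = nI_N$ follows, killing the off-diagonal kernel values. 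Your version is more self-contained and elementary: it avoids Theorem~\ref{thm:reldesbound} (whose proof the paper omits) and avoids the Schur-polynomial computation of $c_{(0,\ldots,0)}$, and it makes transparent why minimality is the operative hypothesis (it is precisely what makes the evaluation matrix square). What the paper's route buys in exchange is uniformity: the same LP-style mechanism handles perturbations of the coefficients $c_\mu$ and yields the other relative-bound corollaries, whereas your matrix argument is specific to the kernel with all coefficients equal to $1$. Both of your closing variants (invertibility of $E$, or the trace comparison $\tr((E^*E)^2)=\tr((EE^*)^2)$) are sound; note that the trace variant still uses $n=N$ numerically, so it does not weaken the tightness hypothesis, it only repackages it.
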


\begin{proof}
Since $F_t(\la) \in \Hom(r,s)$, it follows that $G_t(\la) = \abs{F_t(\la)}^2$ is in $\Hom(t,t)$. Moreover, $G_t(U^*M) \geq 0$ for every $U \neq M$ in $X$, and relations in the Schur polynomials show that when $G_t$ is written in the form $\sum_\mu c_\mu Z_\mu(\la)$, we have $c_{(0,\ldots,0)} = F_t(I)$. Applying Theorem \ref{thm:reldesbound}, we find that 
\[
|X| \geq \frac{G_t(I)}{c_{(0,\ldots,0)}} = F_t(I) = \dimHomU{d}{r}{s}.
\]
Equality holds only if $G_t(U^*M) = 0$ for every $U \neq M$ in $X$. 
\end{proof}

A tight unitary $1$-design in $\U(d)$ has size $d^2$, and the matrices of the design are orthogonal. Such designs exist and are called \defn{unitary operator bases} \cite{Klappenecker03}. A tight $2$-design has size $d^4-2d^2+2$ and is equiangular with angle $\abs{\tr(U^*M)} = \frac{d^2-2}{d^2-1}$ (see also \cite[Theorem 5.5]{Scott08}). A tight unitary $3$-design $X$ has size $d^2(d^4-3d^2+6)/2$, and for every $U \neq M$ in $X$, $\La = U^*M$ satisfies
\[
\tfrac{1}{2}[(d^2 - 2)\tr(\La)^2 + d\tr(\La^2)]\tr(\La^*) = (d^2-3)\tr(\La).
\]

\section{Weighted designs}
\label{sec:weighted}

Let $X$ be a finite subset of $\U(d)$ and let $w: X \rightarrow \re$ be a positive weight function (i.e., $w>0$, $\sum_{U \in X} w(U) = 1$). Then $(X,w)$ is a \textsl{weighted unitary $t$-design} if
\begin{equation}
\sum_{U \in X} w(U) \, U^{\otimes t} \otimes (U^*)^{\otimes t} = \int_{\U(d)} U^{\otimes t} \otimes (U^*)^{\otimes t} \, dU.
\label{eqn:wudesign}
\end{equation}
Every $t$-design $X$ is a weighted $t$-design with weight function $w(x) := 1/|X|$. 

Many results about $t$-designs apply equally well to weighted $t$-designs. In particular, from the proof of Theorem \ref{thm:absdesbound} we get the following:

\begin{corollary}
\label{cor:abswdesbound}
If $(X,w)$ is a weighted $t$-design in $\U(d)$, then
\[
|X| \geq \dimHomU{d}{\lceil t/2 \rceil}{\lfloor t/2 \rfloor}.
\]
Equality holds only if $X$ is a $t$-design.
\end{corollary}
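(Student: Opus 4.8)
The plan is to transcribe the proof of Theorem~\ref{thm:absdesbound} almost verbatim, replacing the uniform averaging functional by its weighted analogue, and then to do extra work for the equality clause. First I would introduce, for functions $f,g$ on $X$, the weighted pairing $\ip{f}{g}_{X,w} := \sum_{U \in X} w(U)\,\overline{f(U)}g(U)$; since $w>0$ this is a genuine positive-definite inner product on the space of functions on $X$. Unwinding definition~\eqref{eqn:wudesign} exactly as in the unweighted case, $(X,w)$ is a weighted $t$-design precisely when $\ip{1}{f} = \ip{1}{f}_{X,w}$ for every $f \in \Hom(t,t)$.

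Next, let $S_1,\dots,S_N$ be an orthonormal basis of $\Hom(\lceil t/2 \rceil, \lfloor t/2 \rfloor)$ for the Haar inner product, where $N = \dimHomU{d}{\lceil t/2 \rceil}{\lfloor t/2 \rfloor}$. Each product $\overline{S_i}S_j$ lies in $\Hom(t,t)$, so the weighted-design identity gives $\ip{S_i}{S_j}_{X,w} = \ip{1}{\overline{S_i}S_j}_{X,w} = \ip{1}{\overline{S_i}S_j} = \ip{S_i}{S_j} = \delta_{ij}$. Hence the $S_i$ are orthonormal, and in particular linearly independent, as functions on $X$; since the space of functions on $X$ has dimension $|X|$, this yields $|X| \geq N$.

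For the equality clause I would show that $|X|=N$ forces the weights to be uniform. Assemble the $|X|\times N$ matrix $B$ with entries $B_{Ui} := \sqrt{w(U)}\,S_i(U)$, indexed by $U \in X$ and $1 \le i \le N$. The orthonormality just established says exactly $B^*B = I_N$, and because $|X|=N$ makes $B$ square this forces $BB^* = I_N$. Reading off the diagonal of $BB^*=I_N$ gives $w(U)\sum_{i} \abs{S_i(U)}^2 = 1$ for each $U \in X$. Now $\sum_i \abs{S_i(U)}^2$ is the diagonal value of the reproducing kernel of $\Hom(\lceil t/2\rceil,\lfloor t/2\rfloor)$, hence independent of the chosen orthonormal basis; by Lemma~\ref{lem:orthorel} this kernel is $\sum_\mu Z_{\mu,U}$, with the sum over the $\mu$ occurring in $\Hom(\lceil t/2\rceil,\lfloor t/2\rfloor)$, so its diagonal value is $\sum_\mu Z_{\mu,U}(U) = \sum_\mu d_\mu\chi_\mu(I) = \sum_\mu d_\mu^{\,2} = N$. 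Substituting, $w(U) = 1/N = 1/|X|$ for every $U$, so the weight is uniform and $X$ is an (unweighted) $t$-design in the sense of~\eqref{eqn:udesign}.

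The routine part is the bound $|X|\geq N$, which is a direct copy of Theorem~\ref{thm:absdesbound} with the weighted inner product in place of $\ip{\cdot}{\cdot}_X$. The main obstacle is the equality clause: the crux is recognizing $\sum_i\abs{S_i(U)}^2$ as the basis-independent diagonal of the reproducing kernel and evaluating it to the constant $N$ via the zonal polynomials $Z_{\mu,U}$ and Lemma~\ref{lem:orthorel}. (Alternatively, this constancy follows directly from the right-translation invariance of both $\Hom(\lceil t/2\rceil,\lfloor t/2\rfloor)$ and the Haar inner product, which shows $\sum_i\abs{S_i(U)}^2$ is a constant function whose Haar average is $N$.)
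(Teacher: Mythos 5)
Your proof is correct, and for the inequality itself it is exactly the paper's argument: the paper proves this corollary simply by noting that the proof of Theorem~\ref{thm:absdesbound} survives verbatim when $\ip{\cdot}{\cdot}_X$ is replaced by the weighted pairing $\ip{\cdot}{\cdot}_{X,w}$, which is positive definite since $w>0$. Where you genuinely differ is the equality clause: the paper does not prove this part at all but defers to an argument of Levenshtein \cite[Theorem 4.3]{Levenshtein98}, whereas you supply a self-contained proof. Your argument is sound: with $|X|=N:=\dimHomU{d}{\lceil t/2 \rceil}{\lfloor t/2 \rfloor}$, the matrix $B_{Ui}=\sqrt{w(U)}\,S_i(U)$ is square and satisfies $B^*B=I_N$, hence also $BB^*=I_N$, giving $w(U)\sum_i\abs{S_i(U)}^2=1$ for every $U\in X$; and the kernel diagonal $\sum_i\abs{S_i(U)}^2$ is indeed basis-independent and equal to the constant $N$, either via Lemma~\ref{lem:orthorel} together with the orthogonal decomposition of $\Hom(\lceil t/2\rceil,\lfloor t/2\rfloor)$ into the spaces $M_\mu$ (so the kernel is $\sum_\mu Z_{\mu,U}$, with diagonal $\sum_\mu d_\mu\chi_\mu(I)=\sum_\mu {d_\mu}^2=N$), or by the translation-invariance argument you sketch. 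This forces $w\equiv 1/N=1/|X|$, so the weights are uniform and $X$ is an (unweighted) $t$-design. What your route buys is a concrete verification carried out entirely inside the paper's own framework, and in fact a slightly stronger conclusion (minimal weighted designs must have uniform weights); what the paper's route buys is brevity and generality, since Levenshtein's theorem applies uniformly to designs in general compact metric spaces rather than only to $\U(d)$.
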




The fact that equality holds only for unweighted designs follows from an argument of Levenshtein \cite[Theorem 4.3]{Levenshtein98}. The bounds in Theorem \ref{thm:reldesbound} and Corollaries \ref{cor:reldesbound1} and \ref{cor:reldesbound2} also hold for weighted designs, with equality only if the designs are unweighted. 

For many applications of classical design theory or spherical designs, weighted designs are just as useful as unweighted designs. Moreover, weighted designs are easier to find. Here, we give an upper bound on the size of smallest weighted $t$-design, using arguments of Godsil \cite[Theorem 14.10.1]{Godsil93} (see also \cite{Godsil86}) and de la Harpe and Pache \cite[Proposition 2.6]{Harpe05}.

\begin{theorem}
\label{thm:desupperbound}
For any $t$ and $d$, there exists a weighted $t$-design $(X,w)$ in $\U(d)$ with 
\[
|X| \leq \dimHomU{d}{t}{t}.
\]
\end{theorem}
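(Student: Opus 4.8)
The plan is to prove existence of a small weighted $t$-design by a convexity/Carathéodory argument, which is the standard route for the kind of upper bound attributed here to Godsil and to de la Harpe--Pache. The key observation is that the defining condition \eqref{eqn:wudesign} for a weighted $t$-design is a \emph{linear} constraint on the weight vector: it asks that the average of $U^{\otimes t}\otimes(U^*)^{\otimes t}$ over the weighted measure equal the Haar integral. Equivalently, working in the dual language of polynomials, $(X,w)$ is a weighted $t$-design precisely when $\sum_{U\in X} w(U)\,f(U) = \int_{\U(d)} f(U)\,dU$ for every $f\in\Hom(t,t)$. Since $\dim\Hom(t,t) = \dimHomU{d}{t}{t}$ is finite, this is a system of finitely many linear equations in the weights.

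First I would encode the problem geometrically. Consider the map $\Phi:\U(d)\to\Hom(t,t)^*$ sending $U$ to the evaluation functional $f\mapsto f(U)$; because the constant function $1$ lies in $\Hom(t,t)$, every such functional has the same value on $1$, so the image $\Phi(\U(d))$ lands in an affine hyperplane of the real vector space underlying $\Hom(t,t)^*$. The Haar integral defines a point $c := \int_{\U(d)} \Phi(U)\,dU$ in this space. A weighted $t$-design supported on a finite set $X$ with weights $w$ is exactly a way of writing $c$ as a convex combination $\sum_{U\in X} w(U)\,\Phi(U)$ of points in $\Phi(\U(d))$. The crucial point is that $c$ lies in the convex hull of $\Phi(\U(d))$ — indeed it is the barycenter of the (connected, compact) image under Haar measure, hence an interior-type point of the hull — so \emph{some} finite convex representation exists. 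Carathéodory's theorem then bounds the number of points needed: any point in the convex hull of a subset of an $N$-dimensional real affine space is a convex combination of at most $N+1$ of its points. Taking $N+1 \le \dimHomU{d}{t}{t}$ after accounting for the constraint imposed by the constant function yields the stated bound.

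The steps, in order, are: (i) identify the real dimension of the affine span of $\Phi(\U(d))$, which is at most $\dimHomU{d}{t}{t}$ (one must be slightly careful that $\Hom(t,t)$ is a space of complex-valued functions, so I would pass to its realification or note that the relevant functionals matching $c$ are real because $c$ and the constraints respect complex conjugation symmetry of $\Hom(t,t)$); (ii) verify that $c$ lies in the convex hull of $\Phi(\U(d))$, using that $c$ is the Haar-average and that $\U(d)$ is compact and connected so the image is a compact connected set whose closed convex hull contains its barycenter; and (iii) invoke Carathéodory to extract a finite set $X$ of the required cardinality together with positive weights summing to $1$. Positivity of the weights is automatic from Carathéodory's theorem for points in the convex hull, and $\sum w(U)=1$ follows from the affine (hyperplane) normalization coming from the constant function $1\in\Hom(t,t)$.

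The main obstacle I anticipate is the careful bookkeeping in step (i): one wants the final count to be exactly $\dimHomU{d}{t}{t}$ rather than $\dimHomU{d}{t}{t}+1$, and this hinges on the constant-function constraint cutting the affine dimension down by one. Concretely, since $1\in\Hom(t,t)$ pins $\Phi(U)$ to a fixed value on that coordinate, the image lies in an affine subspace of dimension at most $\dimHomU{d}{t}{t}-1$, so Carathéodory gives at most $\dimHomU{d}{t}{t}$ points. The secondary subtlety is justifying that $c$ is genuinely in the convex hull (not merely the closed convex hull) so that a genuine finite support works; here I would appeal to the fact that the barycenter of a compact connected set under a fully-supported measure lies in the relative interior of the convex hull, which is contained in the convex hull of finitely many points of the set. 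These are exactly the points handled by the cited arguments of Godsil and of de la Harpe--Pache, so the remaining work is to transcribe that reasoning into the $\U(d)$ setting with $\Hom(t,t)$ playing the role of the relevant polynomial space.
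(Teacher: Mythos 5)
Your proposal is correct and follows essentially the same route as the paper: both are the Godsil / de la Harpe--Pache argument that the design condition is affine, that the Haar barycenter lies in the convex hull of the point evaluations, and that Carath\'eodory's theorem plus the constant-function normalization and the conjugation symmetry of $\Hom(t,t)$ (which is what keeps the real affine dimension at $\dimHomU{d}{t}{t}-1$ rather than roughly twice that) yields the bound. The only difference is cosmetic: the paper coordinatizes with the zonal orthogonal polynomials $Z_{\mu,U}$ (so the target point becomes the origin and the design conclusion follows from Corollary~\ref{cor:chartdes}), whereas you work dually with evaluation functionals on $\Hom(t,t)$.
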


\begin{proof}
Given a nonincreasing integer sequence $\mu = (\mu_1,\dots,\mu_d)$, define $\overline{\mu} := (-\mu_d,\dots,-\mu_1)$, and note that $Z_{\overline{\mu},U}(M) = \overline{Z_{\mu,U}(M)}$. Also define $f_U:=(Z_{\mu_1,U},\dots,Z_{\mu_N,U})$ and $\tilde{f}_U(M):=\Re f_U(M)\oplus\Im f_U(M)$, where we take $\{\mu_1,\dots,\mu_N\}=\{\mu: 0<|\mu_+|\leq t,|\mu|=0\}$. Since $|\overline{\mu}_+|=|\mu_+|$ for $|\mu|=0$, the polynomial $Z_{\overline{\mu},U}(M)=\overline{Z_{\mu,U}(M)}$ appears as an entry in $f_U$ whenever $Z_{\mu,U}(M)$ does (i.e.\ ${V_\mu}^*$ appears in the irreducible decomposition of $V^{\otimes t} \otimes (V^*)^{\otimes t}$ with the same multiplicity as $V_\mu$). Now define $\Omega_X:=\{f_U: U \in X\}$ and $\tilde{\Omega}_X:=\{\tilde{f}_U: U \in X\}$ for $X\subseteq\U(d)$. Since $\overline{Z_{\mu,U}}$ appears in $f_U$ whenever $Z_{\mu,U}$ does, it follows that 
\begin{align*}
\dim_{\re}(\spn_{\re}\tilde{\Omega}_{\U(d)}) = \dim_{\cx}(\spn_{\cx}\Omega_{\U(d)}) 
     = \!\sum_{\substack{|\mu|=0 \\ 0<|\mu_+|\leq t}}\!{d_\mu}^2 
     = \dimHomU{d}{t}{t}-1.
\end{align*}


Now, for any $\mu\neq(0,\dots,0)$,
\begin{align*}
\int_{\U(d)} Z_{\mu,U}(M)dU = \int_{\U(d)}\overline{Z_{\mu,M}(U)} \, dU = \ip{Z_{\mu,M}}{1} =0,
\end{align*}
and thus 
\[
\int_{\U(d)} f_U (M) \, dU = (0,\dots,0),
\] 
which means the zero function $0$ is in the convex hull of $\tilde{\Omega}_{\U(d)}$; see \cite[Lemma 2.5]{Harpe05}. By Carath\'{e}odory's theorem, there exists a finite subset $X\subseteq\U(d)$ for which $0$ is also in the convex hull of $\tilde{\Omega}_X$. That is, there are positive real weights $w(U)$, with $\sum_{U\in X}w(U)=1$, such that
\begin{align*}
(0,\dots,0) = \sum_{U\in X} w(U) f_U (M) =\sum_{U\in X} w(U) (Z_{\mu_1,U}(M),\dots,Z_{\mu_N,U}(M)), 
\end{align*}
and thus,
\begin{align*}
(0,\dots,0) &= \sum_{U\in X} w(U) (\overline{Z_{\mu_1,U}(M)},\dots,\overline{Z_{\mu_N,U}(M)}) \\
&= \sum_{U\in X} w(U) (Z_{\mu_1,M}(U),\dots,Z_{\mu_N,M}(U)),
\end{align*}
independent of $M$. This means $(X,w)$ is a weighted $t$-design (by Corollary~\ref{cor:chartdes} in the weighted case). Moreover, again by Carath\'{e}odory's theorem, $X$ can be chosen with cardinality $|X|\leq \dim_{\re}(\spn_{\re}\tilde{\Omega}_{\U(d)})+1= \dimHomU{d}{t}{t}$.
\end{proof}

In fact, since $\U(d)$ is connected, the bound can be improved by one: $|X| \leq \dimHomU{d}{t}{t}-1$; see de la Harpe and Pache \cite[Proposition 2.7]{Harpe05}.

\section{Constructions}
\label{sec:constructions}

Some constructions of 2-designs were given by Dankert, Cleve, Emerson and Livine \cite{Dankert06} and Gross, Audenaert and Eisert \cite{Gross07}. The argument of Seymour and Zaslavsky \cite{Seymour84} shows that $\U(d)$ $t$-designs exist for every $t$ and $d$ \cite[Corollary 5.3]{Scott08}, and the result of the previous section bounds the size of the smallest weighted $t$-design to $O(d^{4t})$. Since $\U(2)$ $t$-designs are equivalent to $\re P^3$ $t$-designs~\cite{Scott08}, the interesting dimensions are $d>2$, in which case, all of the known constructions with $t > 1$ come from groups. 

Group designs were defined by Gross {\it et al\/} \cite{Gross07} as the images of unitary representations of finite groups. We make good use of Theorem~\ref{thm:innerproductchar} in this section and thus restate it here for the special case of group designs. Let $\rho$ be a unitary representation of a finite group $G$. Since $\rho(g)^*\rho(h)=\rho(g^{-1}h)$ for all elements $g$ and $h$, we can test whether $X=\{\rho(g):g\in G\}$ is a unitary $t$-design in terms of the character $\chi=\tr\rho$ alone: 
\begin{corollary}
\label{cor:groupdesign}
Let $G$ be a finite group and $\rho:G\rightarrow\U(d)$ a representation with character $\chi$. Then $X=\{\rho(g):g\in G\}$ is a unitary $t$-design if and only if
\[
\frac{1}{|G|}\sum_{g \in G} \abs{\chi(g)}^{2t} = \int_{\U(d)} \abs{\tr(U)}^{2t} \, dU.
\]
\end{corollary}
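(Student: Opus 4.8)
The plan is to specialize Theorem~\ref{thm:innerproductchar} to the set $X=\{\rho(g):g\in G\}$ and exploit the group structure to collapse the double sum into a single sum over $G$. First I would observe that since $\rho$ is a unitary representation, $\rho(g)^*\rho(h)=\rho(g)^{-1}\rho(h)=\rho(g^{-1}h)$, so that $\tr(\rho(g)^*\rho(h))=\tr(\rho(g^{-1}h))=\chi(g^{-1}h)$. One subtlety to address up front: $X$ as defined is the \emph{image} of $\rho$, which is a set and may have size smaller than $|G|$ if $\rho$ is not faithful. I would either note that the relevant quantities are constant on the fibres of $g\mapsto\rho(g)$ so the averages are unaffected, or simply treat $X$ as the multiset $\{\rho(g):g\in G\}$ weighted by $1/|G|$; in either case the left-hand side of~\eqref{eqn:gamma} equals
\[
\frac{1}{|G|^2}\sum_{g,h\in G}\abs{\chi(g^{-1}h)}^{2t}.
\]

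The key step is then a change of variables. For each fixed $g$, as $h$ ranges over $G$ the element $k:=g^{-1}h$ also ranges over all of $G$ exactly once, so the inner sum $\sum_{h\in G}\abs{\chi(g^{-1}h)}^{2t}=\sum_{k\in G}\abs{\chi(k)}^{2t}$ is independent of $g$. Summing over the $|G|$ choices of $g$ and dividing by $|G|^2$ yields
\[
\frac{1}{|G|^2}\sum_{g,h\in G}\abs{\chi(g^{-1}h)}^{2t}=\frac{1}{|G|}\sum_{k\in G}\abs{\chi(k)}^{2t}.
\]
This is precisely the left-hand side of the claimed identity.

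Finally I would invoke Theorem~\ref{thm:innerproductchar} directly: that theorem asserts that~\eqref{eqn:gamma} holds for any finite $X\sbs\U(d)$ with equality if and only if $X$ is a $t$-design. Combining this equality criterion with the computation above, $X=\{\rho(g):g\in G\}$ is a $t$-design if and only if
\[
\frac{1}{|G|}\sum_{g\in G}\abs{\chi(g)}^{2t}=\int_{\U(d)}\abs{\tr(U)}^{2t}\,dU,
\]
which is the assertion of the corollary. I do not expect a genuine obstacle here, since this is essentially an application of an already-established theorem; the only point requiring care is the bookkeeping between the group $G$ and its image $X$ (the faithfulness issue above) and confirming that the equality condition in Theorem~\ref{thm:innerproductchar}, which is stated for subsets of $\U(d)$, transfers cleanly to the weighted/multiset average over $G$. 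Both are routine once stated explicitly.
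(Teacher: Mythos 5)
Your proof is correct and follows essentially the same route as the paper, which presents the corollary as a direct specialization of Theorem~\ref{thm:innerproductchar} using $\rho(g)^*\rho(h)=\rho(g^{-1}h)$ to collapse the double sum over $X$ into a single character sum over $G$. Your extra care about non-faithful $\rho$ (averages being constant on fibres of $g\mapsto\rho(g)$) is a sound handling of a point the paper passes over silently.
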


\begin{table}{\footnotesize
\begin{tabular}{|r|r|r|r|c|c|}\hline
$d$  & $t$  & $\dimHomU{d}{\lceil t/2 \rceil}{\lfloor t/2 \rfloor}$ & $|X|=|H|$ & $H=G/Z(G)$ & $G$ \{$\chi$ no.\} \\\hline\hline 
$q$ & 2 & $q^4-2q^2+2$ & $q^5-q^3$      &  $\ff_q^2\rtimes\mathrm{Sp}(2,q)$   &     \\\hline
2    & 2      & 10      & 12              & $\ff_2^2\rtimes H'_\textrm{C2} \cong A_4$   & SL(2,3) \{4\}  \\
2    & 3  & 20    & 24                 & $\ff_2^2\rtimes\mathrm{Sp}(2,2)\cong S_4$    & GL(2,3) \{4\}  \\
2    & 5   & 56       & 60             & $A_5$   &  SL(2,5) \{2\}  \\\hline
3    & 2    & 65      & 72                & $\ff_3^2\rtimes H'_\textrm{C3}$  &  {\tt 2\^{}3.L3(2)} \{2\} \\
3    & 3    & 270    & 360             & $A_6$   & {\tt 3.A6} \{8\} \\\hline
4    & 3    & 1 712   & 2 520          & $A_7$   &  {\tt 6.A7}  \{10\} \\\hline
5    & 2        & 577   & 600             & $\ff_5^2\rtimes H'_\textrm{C5}$   &  {\tt 5\^{}1+2.2A4} \{9\} \\\hline
6    & 2     & 1 226   & 2 520            & $A_7$   &  {\tt 6.A7}  \{31\} \\
6    & 3  & 21 492   & 40 320          &    &  {\tt 6.L3(4).2\_1} \{49\}  \\\hline
7    & 2     & 2 305    & 2 352           & $\ff_7^2\rtimes H'_\textrm{C7}$   &    \\\hline
8    & 2     & 3 970    & 20 160          &    &   {\tt 4\_1.L3(4)} \{19\} \\\hline
9    & 2      & 6 401  & 12 960           & $\ff_3^4\rtimes H'_\textrm{GAE}$   &     \\\hline
10   & 2     & 9 802    & 95 040          &    &  {\tt 2.M12} \{16\} \\\hline
11   & 2     & 14 401   & 14 520          &  $\ff_{11}^2\rtimes H'_\textrm{C11}$  &    \\\hline
12   & 3 & 1 462 320 & 448 345 497 600  &    &  {\tt 6.Suz} \{153\}  \\\hline
14   & 2     & 38 026   & 87 360          &    & {\tt Sz(8).3} \{4\} \\\hline
18   & 3  & 16 849 620 & 50 232 960     &    &  {\tt 3.J3} \{22\} \\\hline
21   & 2     & 193 601   & 9 196 830 720  &    &  {\tt 3.U6(2)}  \{47\} \\\hline
26   & 2     & 455 626  & 17 971 200      &    &  {\tt 2F4(2)\char"0D} \{2\} \\\hline
28   & 2   & 613 090  & 145 926 144 000   &    & {\tt 2.Ru} \{37\} \\\hline
45   & 2    & 4 096 577  & 10 200 960      &    &  {\tt M23} \{3\} \\\hline
342  & 2   & 13 680 343 370  & 460 815 505 920  &     & {\tt 3.ON} \{31\} \\\hline
 1333 & 2    & $3.157..\times 10^{12}$ & $8.677..\times 10^{19}$  &    &  {\tt J4} \{2\} \\\hline
\end{tabular}}
\caption{The size $|X|$ of the smallest known group design for each $t$ and $d$. The lower bound (Theorem~\ref{thm:absdesbound}) is included for comparison. Group names in the {\tt typewriter} font are those used by the GAP character library CTblLib 1.1.3; e.g.\ the GAP command {\tt chi:=Irr(CharacterTable("3.A6"))[8]} gives an irreducible character for a $\U(3)$ 3-design, which can be confirmed by checking that {\tt Degree(chi)} and {\tt Norm(chi\^{}3)} output $d$ and $t!$ respectively. The Clifford designs, i.e.\ those with $H\cong\ff_q^{2n}\rtimes H'$ for some $H'\leq\mathrm{Sp}(2n,q)$ ($d=q^n$), were described by Gross {\it et al\/} \cite{Gross07}, where $H'_\textrm{GAE}\leq\mathrm{Sp}(4,3)$ was also given. The subgroups  $H'_{\textrm{C}d}\leq\mathrm{Sp}(2,d)$ that generate 2-designs of size $|X|=d^2(d^2-1)$ ($d=2,3,5,7,11$) were found by Chau~\cite{Chau05}, and are defined in the text.}
\label{tbl:groupdesigns}\end{table}

Thus, whenever $X$ represents a group, we can confirm $X$ to be a $t$-design by simply checking that $\ip{1}{f}_X=\ip{1}{f}$ for the single polynomial $f(U)=\abs{\tr(U)}^{2t}$; in general, a basis for $\Hom(t,t)$ must be checked.

Note that $X$ is a unitary 1-design if and only if $\rho$ is irreducible: if $\bigoplus_\mu(\rho_\mu\otimes I_{m_\mu})$ is an irreducible decomposition of $\rho$, then the multiplicities satisfy
\[
\sum_\mu m_\mu^2 = \frac{1}{|G|}\sum_{g \in G} \abs{\chi(g)}^2 = \int_{\U(d)} \abs{\tr(U)}^{2} \, dU=1
\]
by Schur orthogonality. We can therefore restrict our search for group designs to \emph{irreducible} representations of finite groups. By Schur's lemma, we then have $\rho(g)\propto I$ for all $g\in Z(G)$, and the size of the design can be reduced to $|G|/|Z(G)|$ by ignoring the $|Z(G)|$ different phase factors. Alternatively, the central quotient $H=G/Z(G)$ defines a unitary $t$-design through the irreducible projective unitary representation $\pi:H\rightarrow\PU(d)$ defined by ignoring the $|Z(G)|$ different phases: $\pi(g):=[\rho(g)]$. Note that all group designs of a fixed size can now be found, in principle, by calculating a Schur covering group $C$ of each group $H$ of that size. This is because every projective representation of $H$ can be lifted to an ordinary representation of $C$. Unfortunately, while exhaustive, this approach is computationally expensive and no additional designs were discovered in this way beyond those described next.

Following Gross {\it et al\/} \cite{Gross07}, an alternative approach is to harvest unitary designs straight from the known character tables of finite groups. The smallest known group design for each $t$ and $d$ is listed in Table~\ref{tbl:groupdesigns}. Of these, the so-called {\em Clifford designs\/} deserve special mention: for any $H'\leq\mathrm{Sp}(2n,q)$ that acts transitively on $\ff_q^{2n}\setminus\{0\}$ (and therefore has order $|H'|=k(q^{2n}-1)$ for some integer $k$), there exists a projective unitary representation of $H=\ff_q^{2n}\rtimes H'$ in dimension $d=q^n$ that defines a group 2-design of size $|X|=|H|=d^2|H'|=kd^2(d^2-1)$ \cite{Gross07}. Taking $H=\ff_d^{2}\rtimes\mathrm{Sp}(2,d)$ generates a 2-design in all prime-power dimensions. The resulting size is $|X|=d^3(d^2-1)$, however, which exceeds the lower bound by a factor of $d$ asymptotically. In general, deciding whether there exists a family of 2-designs of size $O(d^4)$, in arbitrarily large dimensions, is an important open problem. It is known that within the class of Clifford designs improvements can be made in some small dimensions: Chau~\cite{Chau05} has given subgroups of $\mathrm{Sp}(2,d)$ with minimal order $|H'|=d^2-1$ that generate 2-designs of size $|X|=d^2(d^2-1)$ in dimensions $d=2,3,5,7,11$ (unfortunately, such optimal $H'$ were proven not to exist in other dimensions); additionally, Gross {\it et al\/} \cite{Gross07} discovered a subgroup of $\mathrm{Sp}(4,3)$ generating a 2-design of size $|X|=2d^2(d^2-1)$ in dimension $d=9$. Explicitly, the Chau subgroups are $H'_\textrm{C2}=\big\langle\big[\begin{smallmatrix} 0&1\\ 1&1 \end{smallmatrix} \big]\big\rangle$, $H'_\textrm{C3}=\big\langle\big[\begin{smallmatrix} 1&1\\ 1&2 \end{smallmatrix} \big],\big[\begin{smallmatrix} 1&2\\ 2&2 \end{smallmatrix} \big]\big\rangle$, $H'_\textrm{C5}=\big\langle\big[\begin{smallmatrix} 2&0\\ 0&3 \end{smallmatrix} \big],\big[\begin{smallmatrix} 1&2\\ 1&3 \end{smallmatrix} \big]\big\rangle$, $H'_\textrm{C7}=\big\langle\big[\begin{smallmatrix} 2&0\\ 0&4 \end{smallmatrix} \big],\big[\begin{smallmatrix} 1&2\\ 1&3 \end{smallmatrix} \big]\big\rangle$, and $H'_\textrm{C11}=\big\langle\big[\begin{smallmatrix} 2&0\\ 0&6 \end{smallmatrix} \big],\big[\begin{smallmatrix} 1&1\\ 1&2 \end{smallmatrix} \big]\big\rangle$, whilst the subgroup discovered by Gross {\it et al\/} is given in Table II of \cite{Gross07}.

Finally, we remark that, although there are currently no known nontrivial examples of weighted $t$-designs for $d>2$, the minimal 2-design in $\U(2)$ is necessarily weighted: it is known that no tight $t$-designs exist for $\U(2)$ when $t>1$ and, in fact, the 12-point group $\U(2)$ 2-design in Table~\ref{tbl:groupdesigns} is minimal for unweighted designs; but there exists a weighted 11-point $\U(2)$ 2-design~\cite{Scott08}. 

\section{Acknowledgements}

AR is supported by NSERC, MITACS, PIMS, iCORE, and the University of Calgary Department of Mathematics and Statistics Postdoctoral Program. AJS is supported by ARC grant CE0348250 and thanks the Institute for Quantum Information Science for their hospitality.


\begin{thebibliography}{10}

\bibitem{Benkart94}
{\sc G.~Benkart, M.~Chakrabarti, T.~Halverson, R.~Leduc, C.~Lee, and
  J.~Stroomer}, {\em Tensor product representations of general linear groups
  and their connections with {B}rauer algebras}, J. Algebra, 166 (1994),
  529--567.

\bibitem{Bump04}
{\sc D.~Bump}, {\em Lie {G}roups}, vol.~225 of Graduate Texts in Mathematics,
  Springer-Verlag, New York, 2004.

\bibitem{Chau05}
{\sc H.~F. Chau}, {\em Unconditionally secure key distribution in higher
  dimensions by depolarization}, IEEE Trans. Inform. Theory, 51 (2005),
  1451--1468.

\bibitem{Collins03}
{\sc B.~Collins}, {\em Moments and cumulants of polynomial random variables on
  unitary groups, the {I}tzykson-{Z}uber integral, and free probability}, Int.
  Math. Res. Not.,  (2003), 953--982.

\bibitem{Collins06}
{\sc B.~Collins and P.~{\'S}niady}, {\em Integration with respect to the {H}aar
  measure on unitary, orthogonal and symplectic group}, Comm. Math. Phys., 264
  (2006), 773--795.

\bibitem{Dankert06}
{\sc C.~Dankert, R.~Cleve, J.~Emerson, and E.~Livine}, {\em Exact and
  approximate unitary 2-designs: constructions and applications},  (2006).

\bibitem{Harpe05}
{\sc P.~de~la Harpe and C.~Pache}, {\em Cubature formulas, geometrical designs,
  reproducing kernels, and {M}arkov operators}, in Infinite groups: geometric,
  combinatorial and dynamical aspects, vol.~248 of Progr. Math., Birkh\"auser,
  Basel, 2005, 219--267.

\bibitem{Delsarte73}
{\sc P.~Delsarte}, {\em An algebraic approach to the association schemes of
  coding theory}, Philips Res. Rep. Suppl.,  (1973), vi+97.

\bibitem{Delsarte75}
{\sc P.~Delsarte, J.~M. Goethals, and J.~J. Seidel}, {\em Bounds for systems of
  lines, and {J}acobi polynomials}, Philips Res. Rep.,  (1975), 91--105.

\bibitem{Delsarte77}
\leavevmode\vrule height 2pt depth -1.6pt width 23pt, {\em Spherical codes and
  designs}, Geometriae Dedicata, 6 (1977), 363--388.

\bibitem{Diaconis94}
{\sc P.~Diaconis and M.~Shahshahani}, {\em On the eigenvalues of random
  matrices}, J. Appl. Probab., 31A (1994), 49--62.

\bibitem{Godsil86}
{\sc C.~D. Godsil}, {\em Polynomial spaces}, in Proceedings of the Oberwolfach
  Meeting ``Kombinatorik'' (1986), vol.~73, 1989, 71--88.

\bibitem{Godsil93}
\leavevmode\vrule height 2pt depth -1.6pt width 23pt, {\em Algebraic
  {C}ombinatorics}, Chapman \& Hall, New York, 1993.

\bibitem{Gross07}
{\sc D.~Gross, K.~Audenaert, and J.~Eisert}, {\em Evenly distributed unitaries:
  on the structure of unitary designs}, J. Math. Phys., 48 (2007), 052104, 22.

\bibitem{Klappenecker03}
{\sc A.~Klappenecker and M.~R{\"o}tteler}, {\em Unitary error bases:
  constructions, equivalence, and applications}, in Applied algebra, algebraic
  algorithms and error-correcting codes ({T}oulouse, 2003), vol.~2643 of
  Lecture Notes in Comput. Sci., Springer, Berlin, 2003, 139--149.

\bibitem{Levenshtein98}
{\sc V.~Levenshtein}, {\em On designs in compact metric spaces and a universal
  bound on their size}, Discrete Math., 192 (1998), 251--271.

\bibitem{Rains98}
{\sc E.~M. Rains}, {\em Increasing subsequences and the classical groups},
  Electron. J. Combin., 5 (1998), Research Paper 12, 9 pp. (electronic).

\bibitem{Renes04}
{\sc J.~Renes, R.~Blume-Kohout, A.~J. Scott, and C.~M. Caves}, {\em Symmetric
  informationally complete quantum measurements}, J. Math. Phys., 45 (2004),
  2171.

\bibitem{Scott08}
{\sc A.~J. Scott}, {\em Optimizing quantum process tomography with unitary
  2-designs}, Journal of Physics A: Mathematical and Theoretical, 41 (2008),
  055308 (26pp).

\bibitem{Sepanski07}
{\sc M.~R. Sepanski}, {\em Compact {L}ie {G}roups}, vol.~235 of Graduate Texts
  in Mathematics, Springer, New York, 2007.

\bibitem{Seymour84}
{\sc P.~D. Seymour and T.~Zaslavsky}, {\em Averaging sets: a generalization of
  mean values and spherical designs}, Adv. in Math., 52 (1984), 213--240.

\bibitem{Stembridge87}
{\sc J.~R. Stembridge}, {\em Rational tableaux and the tensor algebra of {${\rm
  gl}\sb n$}}, J. Combin. Theory Ser. A, 46 (1987), 79--120.

\bibitem{Stembridge89}
\leavevmode\vrule height 2pt depth -1.6pt width 23pt, {\em A combinatorial
  theory for rational actions of {${\rm GL}\sb n$}}, in Invariant theory
  (Denton, TX, 1986), vol.~88 of Contemp. Math., Amer. Math. Soc., Providence,
  RI, 1989, 163--176.

\bibitem{Welch74}
{\sc L.~Welch}, {\em Lower bounds on the maximum cross correlation of signals
  (corresp.)}, Information Theory, IEEE Transactions on, 20 (1974), 397--399.

\end{thebibliography}

\end{document}